\newtheorem{thm}{Theorem}[section]
\newtheorem{cor}[thm]{Corollary}
\newtheorem{lem}[thm]{Lemma}
\newtheorem{prop}[thm]{Proposition}
\newtheorem{exa}[thm]{Example}
\newtheorem{dfn}[thm]{Definition}
\newtheorem{rem}[thm]{Remark}
\numberwithin{equation}{section}
\begin{document}

\title[]{On groups $G_{n}^{k}$, braids and Brunnian braids}

\author{S.Kim}

\address{Department of Fundamental Sciences, Bauman Moscow State Technical University, Moscow, Russia \\
ksj19891120@gmail.com}

\author{V.O.Manturov}

\address{Chelyabinsk State University and Bauman Moscow State Technical University, Moscow, Russia \\
vomanturov@yandex.ru}


\maketitle

\begin{abstract}
In \cite{Manturov} the second author defined the $k$-free braid group with $n$ strands $G_{n}^{k}$. These groups appear naturally as groups describing dynamical systems of $n$ particles in some ``general position''. Moreover, in \cite{ManturovNikonov} the second author and I.M.Nikonov showed that $G_{n}^{k}$ is closely related classical braids. The authors showed that there are homomorphisms from the pure braids group on $n$ strands to $G_{n}^{3}$ and $G_{n}^{4}$ and they defined homomorphisms from $G_{n}^{k}$ to the free product of $\mathbb{Z}_{2}$. That is, there are invariants for pure free braids by $G_{n}^{3}$ and $G_{n}^{4}$. 

On the other hand in \cite{FedoseevManturov} D.A.Fedoseev and the second author studied classical braids with addition structures: parity and points on each strands. The authors showed that the parity, which is an abstract structure, has geometric meaning -- points on strands. In \cite{Kim}, the first author studied $G_{n}^{2}$ with parity and points. the author construct a homomorphism from $G_{n+1}^{2}$ to the group $G_{n}^{2}$ with parity.

In the present paper, we investigate the groups $G_{n}^{3}$ and extract new powerful invariants of
classical braids from $G_{n}^{3}$. In particular, these invariants allow one to distinguish the non-triviality
of Brunnian braids. \\

\end{abstract}

\section{Introduction}

In \cite{Manturov}, the groups $G_{n}^{k}$ depending on two parameters $k,n \in \mathbb{N}, n>k$, were defined. The second author proved that many dynamical systems have topological invariants valued in $G_{n}^{k}$. It follows that there are homomorphisms from the $n$ strand pure braid group to $G_{n}^{3}$ and $G_{n}^{4} $ in \cite{ManturovNikonov}.
The groups $G_{n}^{k}$ are quite powerful by themselves; on the other hand, they admit various homomorphisms to free products of cyclic groups; this allows one to extract various easy-to-calculate invariants of classical braids valued in free groups (in \cite{ManturovNikonov} this approach was used to estimate the unknotting number for braids).

On the other hands, since the discovery of the parity by the second named author~\cite{Manturov1}, many invariants for classical links are extended to the case of virtual links. In~\cite{FedoseevManturov} D.A.Fedoseev and the second author applied the parity to the case of $n$ strand classical braids group. They showed that the parity for braids has a geometrical meaning -- the number of dots on strands. In~\cite{Kim}, the first author analogously enhanced those two structures to $G_{n}^{2}$. The author constructed homomorphism from $G_{n+1}^{2}$ to $G_{n}^{2}$ with parity. 

Brunnian braids are those $n$-strand braids which become trivial after removal of each strand. Such braids are closely related to various problems in geometry and topology, cryptography, etc, in particular, to the homotopy groups of spheres \cite{BardakovVershininWu}, \cite{BardakovMikhailovVershininWu1}, \cite{BardakovMikhailovVershininWu2}. The easy-to-calculate invariants from \cite{ManturovNikonov} $G_{n}^{3}$ valued in the free product of $\mathbb{Z}_{2}$ fail to recognize Brunnian braids(Lemma~\ref{failrecognizebrunnian}) when the number of strands is large.

The aim of the present paper is to construct more powerful invariants of braids (obtained by using homomorphisms from $G_{n}^{3}$ to other free products of cyclic groups and parity) and apply them to the recognize Brunnian braids. 

In section 2, we remind basic definitions and propositions. In section 3, we show that the image of a Brunnian braid in $PB_{n}$ by the homomorphism from $PB_{n}$ to $G_{n}^{3}$ defined in \cite{ManturovNikonov} is also Brunnian in $G_{n}^{3}$ (Theorem~\ref{thm_pres_brun}). In section 4 we show that the value of the invariant in \cite{ManturovNikonov} from $G_{n}^{3}$ for Brunnian braids is trivial and construct new invariant of classical braids from $G_{n}^{3}$ and $G_{n,p}^{2}$, which is a group obtained from $G_{n}^{2}$ by adding parity. And we show the example~\ref{recog_brun}, in which our new invariant distinguish the non-triviality of Brunnian braids. In section 5, we construct invariants from $G_{n,p}^{3}$, which is a group obtained from $G_{n}^{3}$ by adding parity.

\section{Basic definitions}

\begin{dfn}\cite{Manturov2}
Let $G_{n}^{2}$ be the group given by the presentation generated by $\{ a_{ij}~|~ \{i,j\} \subset \{1, \dots, n\}, i < j \}$ subject to the following relations:

\begin{enumerate}
\item $a_{ij}^{2} = 1$ for all $i \neq j$, 
\item $a_{ij}a_{kl} = a_{kl}a_{ij}$ for distinct $i,j,k,l$,
\item $a_{ij}a_{ik}a_{jk} = a_{jk}a_{ik}a_{ij}$ for distinct $i,j,k$.
\end{enumerate}
\end{dfn}

The group $G_{n}^{2}$ is also known as the pure free braid group on $n$ strands, see~\cite{Manturov}. Usually, free braids are depicted by free braid diagrams as follows.
\begin{dfn}
A {\em free braid diagram} is a graph, which is immersed in the rectangle $\mathbb{R} \times [0,1]$ with the following properties:
\begin{enumerate}
\item The graph vertices of valency $1$ are the points $[i,0]$ and $[i,1]$, $i = 1, \cdots, n$.
\item All other  vertices are $4$-valent vertices. 
\item For each $4$-valent vertex, edges are split into two pairs, two edges in which are called {\em opposite edeges}. The opposite edges in such vertices are at the angle of $\pi$.
\item The braid strands monotonously go down.
\end{enumerate}
A {\em braid strand} is an equivalence class of edges; two edges are called {\em equivalent} if there is a sequence of edges such that two consecutive edges in the sequence are opposite. 

For a free braid diagram, if $[i,0]$ and $[i,1]$ belong to the same strand for every $i \in \{1, \cdots, n\}$, then the diagram is called {\em a pure free braid diagram}.
\end{dfn}
In the group $G_{n}^{2}$, $a_{ij}$ corresponds to a crossing between $i$-th and $j$-th strands, see Fig.~\ref{exa-cro}. It is easy to see that the relations of $G_{n}^{2}$ correspond to Artin moves in Fig.~\ref{movesArtin}.
\begin{figure}[h!]
\begin{center}
 \includegraphics[width =6cm]{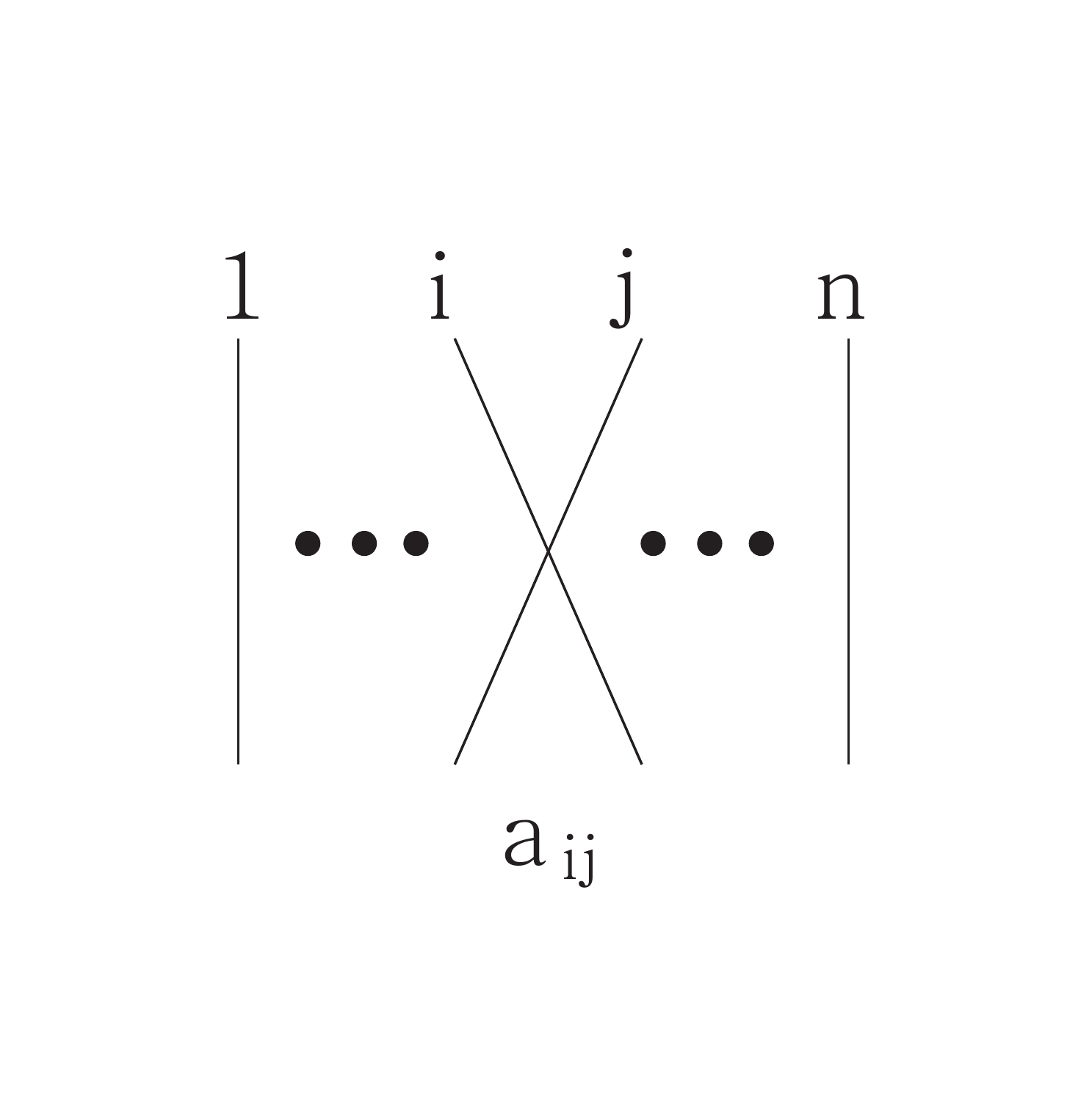}

\end{center}

 \caption{A crossing corresponding to $a_{ij}$}\label{exa-cro}
\end{figure}

\begin{dfn}
{\em A free braid on $n$ strand} is an equivalence class of free diagrams of $n$-strand braid under Artin moves for free braids in Fig.~\ref{movesArtin}.

\begin{figure}[h!]
\begin{center}
 \includegraphics[width =12cm]{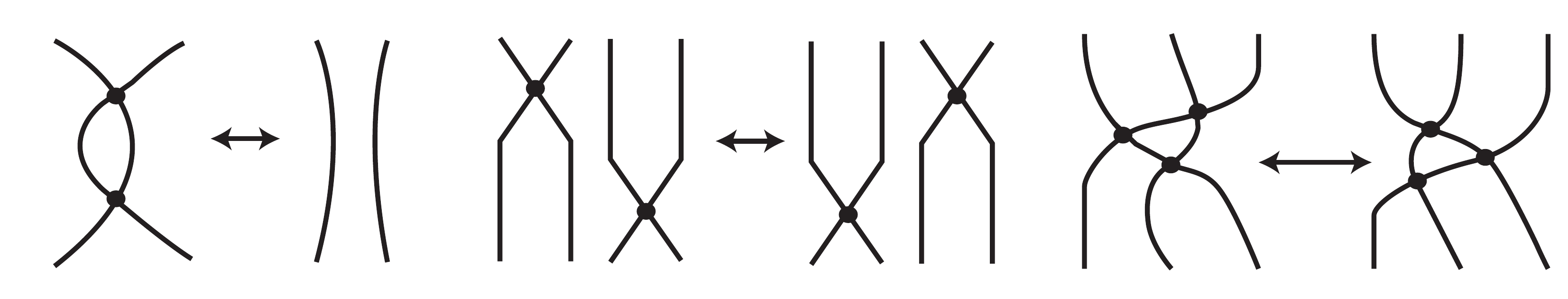}

\end{center}

 \caption{Artin moves for free braids}\label{movesArtin}
\end{figure}
\end{dfn}

\begin{dfn}\cite{Manturov2}
Let $G_{n}^{3}$ be the group given by the presentation generated by $\{ a_{\{ijk\}}~|~ \{i,j,k\} \subset \{1, \dots, n\}, |\{i,j,k\}| = 3\}$ subject to the following relations:

\begin{enumerate}
\item $a_{\{ijk\}}^{2} = 1$ for all $\{i,j,k\} \subset \{1, \cdots,n\}$, 
\item $a_{\{ijk\}}a_{\{stu\}} = a_{\{stu\}}a_{\{ijk\}}$, if $| \{i,j,k\} \cap \{s,t,u\} | < 2$.
\item $a_{\{ijk\}}a_{\{ijl\}}a_{\{ikl\}}a_{\{jkl\}} = a_{\{jkl\}}a_{\{ikl\}}a_{\{ijl\}}a_{\{ijk\}}$ for distinct $i,j,k,l$.
\end{enumerate}
Simply, we denote $a_{\{ijk\}} = a_{ijk}$.
\end{dfn}

There is a homomorphism $ PB_{n} \rightarrow G_{n}^{3}$, which is constructed in~\cite{ManturovNikonov}. In the next section, we shall enhance this homomorphism by adding new structures (parity, dots) to the group $G_{n}^{k}$. Geometrically it originates from homomorphisms $PB_{n} \rightarrow PB_{n+1}$ such that each homomorphism adds a solitary strand.
\begin{dfn}\cite{Kim}
For a positive integer $n$, let us define $G_{n,p}^{2}$ as the group presentation generated by $ \{ a_{ij}^{\epsilon} ~|~ \{i,j\} \subset \{1, \dots, n\}, i < j, ~\epsilon \in \{0,1\}  \}$ subject to the following relations:
\begin{center}\begin{enumerate}
\item $(a_{ij}^{\epsilon})^{2} = 1$, $\epsilon \in \{0,1\}$ and $i,j \in \{1, \cdots, n\}$,
\item $a_{ij}^{\epsilon_{ij}}a_{kl}^{\epsilon_{kl}} = a_{kl}^{\epsilon_{kl}}a_{ij}^{\epsilon_{ij}}$ for $1 \leq i <j<k<l \leq n$,
\item $a_{ij}^{\epsilon_{ij}}a_{ik}^{\epsilon_{ik}}a_{jk}^{\epsilon_{jk}} = a_{jk}^{\epsilon_{jk}}a_{ik}^{\epsilon_{ik}}a_{ij}^{\epsilon_{ij}}$,  for $1 \leq i <j<k \leq n$, where $\epsilon_{ij}+\epsilon_{ik}+\epsilon_{jk} \equiv 0$ mod $2$.
\end{enumerate}
\end{center}
\end{dfn}

\section{Brunnian braids in $PB_{n}$ and in $G_{n}^{3}$}
In classical braid groups, a special role is played by {\em Brunnian braids on $n$ strands}. A Brunnian braid is a braid such that each braid obtained by omitting one strand is trivial for every strand. We can define Brunnian elements in groups $G_{n}^{k}$ and we will see that the image of a Brunnian braid is Brunnian in $G_{n}^{3}$ by a homomorphism from $PB_{n}$ to $G_{n}^{3}$ in \cite{ManturovNikonov}.
Let us define a mapping $p_{m} : PB_{n+1} \rightarrow PB_{n}$ by
\begin{center}
$p_{m}(b_{ij})  = \left\{
\begin{array}{cc} 
    1 & \text{if}~ j= m, \\
        b_{ij} & \text{if}~ i,j<m, \\
       b_{i(j-1)} & \text{if}~ i<m, j>m,\\
       b_{(i-1)(j-1)} & \text{if}~ i, j>m, \\
   \end{array}\right.$
   \end{center}
Roughly speaking, this mapping deletes one strand from braids on $n$ strands.
\begin{figure}[h!]
\begin{center}
 \includegraphics[width =10cm]{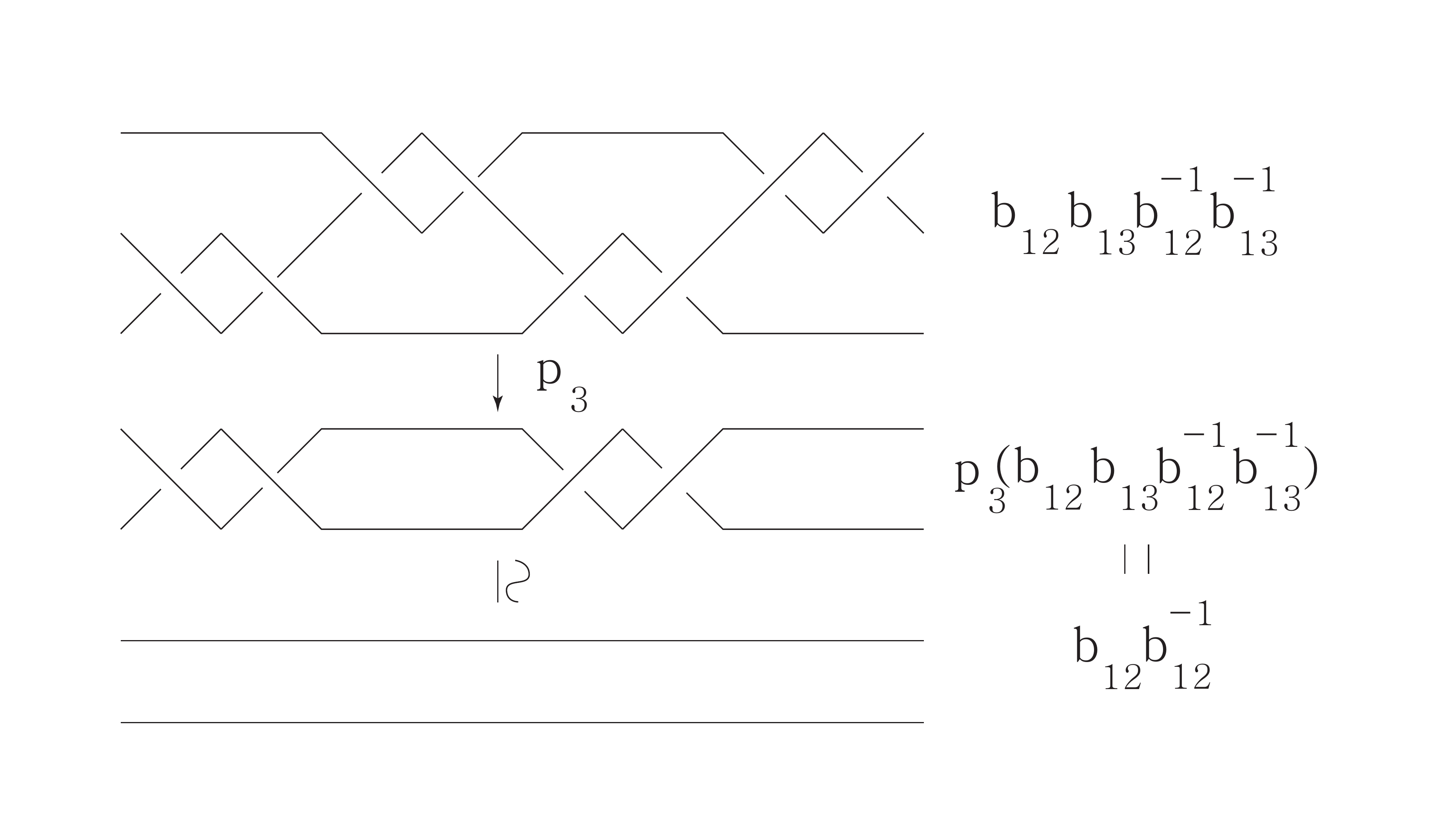}

\end{center}

 \caption{Diagrams of $b_{12}b_{13}b_{12}^{-1}b_{13}^{-1}$ and $p(b_{12}b_{13}b_{12}^{-1}b_{13}^{-1}) = b_{12}b_{12}^{-1}$}\label{exa-projection}
\end{figure}

\begin{dfn}
{\em An $n$-strand Brunnian braid} is a braid on $n$ strands such that $p_{m}(\beta) = 1$ for each index $m$.
\end{dfn}
Note that every element in $PB_{n}$ of the form $[\cdots [[b_{i_{1}j_{1}},b_{i_{2}j_{2}}],b_{i_{3}j_{3}}],\cdots],b_{i_{m}j_{m}}]$ is Brunnian, for example, Fig.~\ref{exa-projection}.

Let us define a mapping $q_{m} : G_{n+1}^{3} \rightarrow G_{n}^{3}$ by

\begin{center}
$q_{m}(a_{ijk})  = \left\{
\begin{array}{cc} 
    1 & \text{if}~ k= m \\
       a_{ijk} & \text{if}~ i,j,k<m, \\
      a_{ij(k-1)} & \text{if}~ i,j<m, k>m, \\
       a_{i(j-1)(k-1)} & \text{if}~ i<m, j, k>m,\\
       a_{(i-1)(j-1)(k-1)} & \text{if}~ i, j, k>m,\\
   \end{array}\right.$
   \end{center}

\begin{dfn}
An element $\beta$ from $G_{n+1}^{3}$ is called {\em Brunnian} if $q_{m}(\beta) = 1$ for each index $m$.
\end{dfn}
In~\cite{ManturovNikonov} the authors constructed homomorphisms from $PB_{n}$ to $G_{n}^{3}$ and to $G_{n}^{4}$. We recall the mapping $\phi_{n}$ from $PB_{n}$ to $G_{n}^3$. Let 
$$c^{n}_{i,j} = \prod_{k=j+1}^{n} a_{ijk} \prod_{k=1}^{j-1} a_{ijk} \in G_{n}^{3}.$$ 
Let us define $\phi_{n} : PB_{n} \rightarrow G_{n}^{3}$ by 
$$\phi_{n}(b_{ij}) = (c^{n}_{i,i+1})^{-1}(c^{n}_{i,i+2})^{-1} \cdots (c^{n}_{i,j-1})^{-1} (c^{n}_{i,j})^{2} c^{n}_{i,j-1} \cdots c^{n}_{i,i+2} c^{n}_{i,i+1},$$ 
for each generator $b_{ij}$.
\begin{prop}\cite{ManturovNikonov}
The mapping $\phi_{n} : PB_{n} \rightarrow G_{n}^{3}$ is well defined.
\end{prop}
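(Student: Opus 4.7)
The plan is to verify that every defining relation of the pure braid group $PB_n$ is satisfied by the elements $\phi_n(b_{ij}) \in G_n^3$. Recall that $PB_n$ admits an Artin-type presentation in the generators $b_{ij}$, $1 \le i < j \le n$, with four families of relations distinguished by the relative position of the pairs $\{r,s\}$ and $\{i,j\}$ in the conjugation $b_{rs}^{-1} b_{ij} b_{rs}$: the far disjoint case (pure commutation), the linked disjoint case, and two cases where $\{r,s\}$ and $\{i,j\}$ share one index. Since $\phi_n$ is defined on generators, the task reduces to checking these four families in $G_n^3$.

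A useful first step is to rewrite $\phi_n(b_{ij})$ as $D_{i,j}^{-1}(c^n_{i,j})^2 D_{i,j}$, where $D_{i,j} = c^n_{i,j-1}\cdots c^n_{i,i+1}$, so that $\phi_n(b_{ij})$ is a conjugate of the ``square'' $(c^n_{i,j})^2$. Then I would catalogue which generators $a_{pqu}$ appear in each factor $c^n_{i,k}$: every such $a_{pqu}$ contains the pair $\{i,k\}$. With this inventory in hand, the pure commutation relations follow by repeated application of $G_n^3$-relation (2), since two generators $a_{pqu}$ and $a_{p'q'u'}$ sharing at most one index commute; any residual pair sharing exactly two indices has to be reshuffled using the tetrahedron relation (3). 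This already disposes of the far disjoint case and most of the linked disjoint case.

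The main obstacle is the two shared-index families, for instance $b_{rs}^{-1} b_{sj} b_{rs} = (b_{rj} b_{sj})\,b_{sj}\,(b_{rj} b_{sj})^{-1}$ and its analogue with the first index shared. Here the tetrahedron relation must be iterated along the summation index $k$ in $c^n_{i,j} = \prod_k a_{ijk}$, and one must match the conjugation pattern arising from the $c$-products with the one prescribed by $PB_n$. My strategy would be to induct on $j-i$ (equivalently, on the number of factors in $D_{i,j}$), reducing each verification to an elementary step in which the tetrahedron relation applies to the four indices currently in play, and using $a_{ijk}^2=1$ to collapse square factors of $(c^n_{i,j})^2$ after the rearrangement. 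The technical core is to find a normal form for $\phi_n(b_{ij})$ in which conjugation by $\phi_n(b_{rs})$ visibly decomposes into a sequence of elementary tetrahedron moves; this bookkeeping is what makes the verification long but, once set up, essentially mechanical. I would follow the reduction scheme used in \cite{ManturovNikonov}, where the analogous identity is checked for related homomorphisms.
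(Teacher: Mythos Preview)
The paper does not prove this proposition at all: it is stated with a citation to \cite{ManturovNikonov} and immediately used, with no argument given in the present paper. So there is no ``paper's own proof'' to compare against; the result is imported wholesale from the reference.

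Your outline---verify the four families of Artin relations for $PB_n$ by rewriting $\phi_n(b_{ij})=D_{i,j}^{-1}(c^n_{i,j})^2 D_{i,j}$ and reducing each check to repeated use of the far-commutation relation (2) and the tetrahedron relation (3) in $G_n^3$---is the natural strategy and is essentially what one expects the cited paper to do. Your sketch is not a complete proof (the ``bookkeeping'' you allude to in the shared-index cases is the entire content, and you have not actually carried out a single tetrahedron reduction), but as a plan it is sound, and since you explicitly defer to \cite{ManturovNikonov} for the details, your proposal is consistent with how the present paper handles the statement.
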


\begin{thm}\label{thm_pres_brun}
For a Brunnian braid $\beta \in PB_{n}$, $\phi_{n}(\beta)$ is a Brunnian in $G_{n}^{3}$.
\end{thm}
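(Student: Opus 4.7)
My plan is to prove the naturality identity $q_m\circ\phi_n=\phi_{n-1}\circ p_m$ for every $m\in\{1,\dots,n\}$. Granted this, if $\beta\in PB_n$ is Brunnian then $q_m(\phi_n(\beta))=\phi_{n-1}(p_m(\beta))=\phi_{n-1}(1)=1$ for every $m$, which is by definition the assertion that $\phi_n(\beta)$ is Brunnian in $G_n^3$. Since all four maps are group homomorphisms, it is enough to verify the identity on each generator $b_{ij}$ of $PB_n$.

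The key intermediate computation is the effect of $q_m$ on a single ``column'' $c^{n}_{i,k}=\prod_{l>k}a_{ikl}\prod_{l<k,\,l\ne i}a_{ikl}$. Tracking each factor $a_{ikl}$ through the definition of $q_m$, one obtains
\[
q_m(c^n_{i,k})=\begin{cases}1 &\text{if } m\in\{i,k\},\\ c^{n-1}_{\sigma_m(i),\sigma_m(k)} &\text{otherwise,}\end{cases}
\]
where $\sigma_m$ denotes the order-preserving bijection $\{1,\dots,n\}\setminus\{m\}\to\{1,\dots,n-1\}$. In the first case every factor already contains the index $m$; in the second case only the single factor $a_{i,k,m}$ is killed, while the remaining factors relabel consistently and stay in exactly the order prescribed by the defining formula for $c^{n-1}_{\sigma_m(i),\sigma_m(k)}$, so no relations in $G_{n-1}^3$ are needed for the equality.

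With this formula in hand, the verification of $q_m\circ\phi_n=\phi_{n-1}\circ p_m$ on $b_{ij}$ splits into four cases. When $m>j$ or $m<i$, every block $c^n_{i,k}$ for $k=i+1,\dots,j$ survives and shifts uniformly, so the palindromic expression for $\phi_n(b_{ij})$ maps termwise to the one defining $\phi_{n-1}(b_{ij})$ or $\phi_{n-1}(b_{i-1,j-1})$, matching $\phi_{n-1}(p_m(b_{ij}))$. When $m\in\{i,j\}$ the central block $c^n_{i,j}$ is annihilated, so $(c^n_{i,j})^2\mapsto 1$ and the surrounding palindrome telescopes to the identity, again matching $\phi_{n-1}(p_m(b_{ij}))=\phi_{n-1}(1)=1$.

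The main obstacle is the remaining case $i<m<j$, in which the symmetric factor $c^n_{i,m}$ flanking $(c^n_{i,j})^2$ is annihilated while the factors $c^n_{i,k}$ with $k<m$ stay in place and those with $k>m$ shift down by one. Here one must check that the surviving product collapses to $\phi_{n-1}(b_{i,j-1})$; this reduces to observing that $\{\sigma_m(k):i+1\le k\le j,\ k\ne m\}=\{i+1,\dots,j-1\}$ and that the palindromic pattern survives the relabeling. This is a purely combinatorial index-chase, but it is the most delicate part of the verification.
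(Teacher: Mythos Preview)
Your proof is correct and follows the same approach as the paper's: establish the naturality $q_m\circ\phi_n=\phi_{n-1}\circ p_m$ via the intermediate computation of $q_m(c^n_{i,k})$, then verify on generators. The paper states the lemma only for $m=n$ and then says the general case is analogous, whereas you carry out the case split (including the interior case $i<m<j$) more explicitly; the arguments are otherwise identical.
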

To prove the above theorem, firstly we prove the following lemma.
\begin{lem}
For $i,j \in  \{1, \cdots ,n \}$ and $i<j$,
\begin{center}
$q_{n}(c^{n}_{i,j})  = \left\{
\begin{array}{cc} 
    1 & \text{if}~ j = n, \\
       c^{n-1}_{i,j} & \text{if}~j \neq n. \\
     \end{array}\right.$
   \end{center}
\end{lem}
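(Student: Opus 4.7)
The plan is to unwind the definition
\[
c^{n}_{i,j} \;=\; \prod_{k=j+1}^{n} a_{ijk}\;\prod_{k=1}^{j-1} a_{ijk}
\]
and apply $q_{n}$ factor by factor, using that $q_{n}$ kills any generator one of whose indices equals $n$ and fixes any generator all of whose indices are strictly less than~$n$. The argument then reduces to a routine case split according to whether $j=n$ or $j<n$.

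If $j=n$, the first product in the definition of $c^{n}_{i,n}$ is empty, so $c^{n}_{i,n}=\prod_{k=1}^{n-1} a_{ink}$. Every factor here carries the index $n$, so by the first clause in the definition of $q_{n}$ each factor is sent to~$1$, giving $q_{n}(c^{n}_{i,n})=1$.

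Now suppose $j<n$. In the second product $\prod_{k=1}^{j-1} a_{ijk}$ all indices $i,j,k$ are strictly less than $j<n$, so $q_{n}$ fixes every factor. In the first product $\prod_{k=j+1}^{n} a_{ijk}$ the single factor with $k=n$ is killed by $q_{n}$, and for $j+1\le k\le n-1$ we have $i<j<k<n$, so $q_{n}$ again fixes those factors. Collecting the surviving terms in the same order gives
\[
q_{n}(c^{n}_{i,j}) \;=\; \prod_{k=j+1}^{n-1} a_{ijk}\;\prod_{k=1}^{j-1} a_{ijk} \;=\; c^{n-1}_{i,j},
\]
which is the second case of the lemma.

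I do not anticipate any serious obstacle: the statement is purely bookkeeping once one notices that all indices appearing in $c^{n}_{i,j}$ are already sorted in the form required by the piecewise definition of $q_{n}$, so no relations of $G_{n}^{3}$ need to be invoked to rearrange factors before the termwise evaluation. The only minor point to be careful about is the empty-product convention when $j=n$ (resp.\ $j=1$), which is why the two cases $j=n$ and $j<n$ are treated separately.
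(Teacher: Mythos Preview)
Your proof is correct and follows essentially the same approach as the paper: both split into the cases $j=n$ and $j\neq n$, expand $c^{n}_{i,j}$ from its definition, and apply $q_{n}$ factor by factor, killing precisely the factor(s) containing the index~$n$ and leaving the remaining ones unchanged to obtain $c^{n-1}_{i,j}$. One cosmetic slip: in the sentence ``all indices $i,j,k$ are strictly less than $j<n$'' you of course mean strictly less than~$n$, not~$j$.
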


\begin{proof}
If $j=n$, then $$q_{n}(c^{n}_{i,n}) = q_{n}(  \prod_{k=1}^{n-1} a_{ink})= \prod_{k=1}^{n-1}p_{n}(a_{ink}) =1.$$
If $i,j \neq n$, then $$q_{n}(c^{n}_{i,j}) = q_{n}(  \prod_{k=j+1}^{n} a_{ijk} \prod_{k=1}^{j-1} a_{ijk})= \prod_{k=j+1}^{n} p_{n}(a_{ijk}) \prod_{k=1}^{j-1}p_{n}(a_{ijk}) = \prod_{k=j+1}^{n-1} a_{ijk} \prod_{k=1}^{j-1} a_{ijk}= c^{n-1}_{i,j}.$$\\
\end{proof}

Analogously we can show that 
\begin{center}
$q_{m}(c_{i,j})  = \left\{
\begin{array}{cc} 
    1 & \text{if}~ j= m \\
        c_{ij} & \text{if}~ i,j<m,\\
      c_{i(j-1)} & \text{if}~ i<m, j>m,\\
       c_{(i-1)(j-1)} & \text{if}~ i, j>m,\\
   \end{array}\right.$
   \end{center}

\begin{proof}[Proof of Theorem~\ref{thm_pres_brun}]
It is sufficient to show that $q_{m} \circ \phi_{n} = \phi_{n-1} \circ p_{m}$, because if $p_{m}(\beta) =1$, then $q_{m} \circ \phi_{n}(\beta) = \phi_{n-1} \circ p_{m}(\beta) = \phi_{n-1}(1)=1$. For $m=n$ and $b_{ij} \in PB_{n}$, if $j=n$, then $\phi_{n-1} \circ p_{n}(b_{in}) = 1$ and

\begin{eqnarray*}
q_{n} \circ \phi_{n} (b_{in}) &=& q_{n}((c^{n}_{i,i+1})^{-1}(c^{n}_{i,i+2})^{-1} \cdots (c^{n}_{i,n-1})^{-1} (c^{n}_{i,n})^{2} c^{n}_{i,n-1} \cdots c^{n}_{i,i+2} c^{n}_{i,i+1}) \\
 &=& (c^{n-1}_{i,i+1})^{-1}(c^{n-1}_{i,i+2})^{-1} \cdots (c^{n-1}_{i,n-1})^{-1} q_{n}((c^{n}_{i,n})^{2}) c^{n-1}_{i,n-1} \cdots c^{n-1}_{i,i+2} c^{n-1}_{i,i+1} \\ 
 &=&  (c^{n-1}_{i,i+1})^{-1}(c^{n-1}_{i,i+2})^{-1} \cdots (c^{n-1}_{i,n-1})^{-1} c^{n-1}_{i,n-1} \cdots c^{n-1}_{i,i+2} c^{n-1}_{i,i+1} =1.
\end{eqnarray*}
 If $i,j \neq n$, then 
 \begin{eqnarray*}
  \phi_{n-1} \circ p_{n}(b_{ij}) &=& \phi_{n-1}(b_{ij}) \\ 
  &=&  (c^{n-1}_{i,i+1})^{-1}(c^{n-1}_{i,i+2})^{-1} \cdots (c^{n-1}_{i,j-1})^{-1} (c^{n-1}_{i,j})^{2} c_{i,j-1} \cdots c^{n-1}_{i,i+2} c^{n-1}_{i,i+1},
 \end{eqnarray*}

 and
 \begin{eqnarray*}
 q_{n} \circ \phi_{n} (b_{ij}) &=& q_{n}((c^{n}_{i,i+1})^{-1}(c^{n}_{i,i+2})^{-1} \cdots (c^{n}_{i,j-1})^{-1} (c^{n}_{i,j})^{2} c^{n}_{i,j-1} \cdots c^{n}_{i,i+2} c^{n}_{i,i+1})\\
 &=& (c^{n-1}_{i,i+1})^{-1}(c^{n-1}_{i,i+2})^{-1} \cdots (c^{n-1}_{i,j-1})^{-1} (c^{n-1}_{i,j})^{2} c^{n-1}_{i,j-1} \cdots c^{n-1}_{i,i+2} c^{n-1}_{i,i+1}.
\end{eqnarray*}

Analgously, it is easy to show that  $q_{m} \circ \phi_{n} = \phi_{n-1} \circ p_{m}$. 
\end{proof}

\section{A new index invariant for $G_{n}^{2}$ and $G_{n}^{3}$}

The group $G_{n}^{3}$ is very powerful. In~\cite{ManturovNikonov},  some index invariants valued in $\mathbb{Z}_{2} * \cdots * \mathbb{Z}_{2}$ were extracted from $G_{n}^{3}$. 
The aim of this section is to show that the MN invariants constructed previously fail to recognize the non-triviality of Brunnian braids(Lemma~\ref{failrecognizebrunnian}). We can enhance these invariants by using the structure of $G_{n}^{k}$; in fact, we shall make only one step allowing us to recognize the commutator, see example \ref{recog_brun}.  But in principle, it is possible to go on enhancing the invariants coming from $G_{n}^{k}$ (even from $G_{n}^{3}$) to get invariants which recognize the non-triviality of commutators of arbitrary lengths: $[[[[[b_{12},b_{13}],b_{14}],b_{15},...]$. More precisely, the group $G_{n}^{3}$ itself recognizes the non-triviality of such braids, and the corresponding invariants can be derived as maps from $G_{n}^{3}$ to free products of $Z_{2}$ (Example~\ref{recog_brun}). It would be interesting to compare it with lower central series\cite{BardakovVershininWu},\cite{BardakovMikhailovVershininWu2},\cite{BardakovMikhailovVershininWu1}.

Firstly, we recall the definition of those homomorphisms(invariants) and construct new ones. 
\begin{dfn}
Let $\beta \in G_{n}^{k}$. If the number of $a_{m}$ of $\beta$ is even for each multiindex $m$, then it is called {\em a word in a good condition}. Analogously if the number of $b_{ij}$ of $\beta \in PB_{n}$ is even for every pair $i,j$, then $\beta$ is called {\em a free braid in a good condition}.
\end{dfn}

\begin{rem}
Let $\beta$ and $\beta'$ in $G_{n}^{k}$ such that $\beta = \beta'$ in $G_{n}^{k}$. Since the relation $a_{m}^{2}$ from $G_{n}^{k}$ changes the number of $a_{m}$ in $\beta$ and other relations do not change the number of $a_{m}$, it is easy to show that if $\beta$ is in good condition, then $\beta'$ is also in good condition. Let $\beta,\beta' \in PB_{n}$ such that $\beta$ and $\beta'$ are equivalent. Analogously, we can show that if $\beta$ is in good condition, then $\beta'$ is also in good condition.
\end{rem}

\begin{rem}
Let $H$ be a subset of all elements in good condition in $PB_{n}$. Then $H$ is a normal subgroup of $PB_{n}$ of finite index. Analogously we can show that a subset of all elements in good condition of $G_{n}^{k}$ is a normal subgroup of $G_{n}^{k}$ of finite index.
\end{rem}
 
We recall the invariant of Manturov and Nikonov, simply MN-invariant, for the case of braids in $G_{n}^{3}$. Let $\beta$ be a free braid on $n$ strands in a good condition. For each $c=a_{ijk}$ of $\beta$ and for $ l \in \{1,2,\cdots, n\} \backslash \{i,j,k\}$, define $i_{c}(l)$ by 
$$i_{c}(l) = (N_{jkl}+N_{ijl}, N_{ikl}+N_{ijl}) \in \mathbb{Z}_{2} \times \mathbb{Z}_{2},$$
where $N_{ikl}$ is the number of $a_{ikl}$ from the start of $\beta$ to the crossing $c$. Note that $i_{c}$ can be considered as a map from $\{1,2,\cdots, n \} \backslash \{i,j,k\} $ to $\mathbb{Z}_{2} \times \mathbb{Z}_{2}$. Fix $i,j,k\in \{1,\dots, n\}$. Let $\{c_{1}, \cdots, c_{m}\}$ be the set of $a_{ijk}$ such that for each $s , t \in \{1, 2, \cdots, m\}$, $ s< t$ if and only if we meet $c_{s}$ earlier than $c_{t}$ in $\beta$. Define a group presentation $F_{n}^{3}$ generated by $\{ \sigma ~|~ \sigma : \{1,2,\cdots n\} \backslash \{i,j,k\} \rightarrow \mathbb{Z}_{2} \times \mathbb{Z}_{2} \}$ with relations $\{ \sigma^{2} = 1\}$. Note that $i_{c}$ is a mapping from $\{1,2,\cdots n\} \backslash \{i,j,k\} \rightarrow \mathbb{Z}_{2} \times \mathbb{Z}_{2}$ and $i_{c}$ is in  $F_{n}^{3}$. In other words, we deal with a free product of $2^{2(n-3)}$ copies of $\mathbb{Z}_{2}$. Define a word $w_{(i,j,k)}(\beta)$ in $F_{n}^{3}$ for $\beta$ by $w_{(i,j,k)}(\beta) = i_{c_{1}}i_{c_{2}} \cdots i_{c_{m}}$.
 
\begin{prop}\cite{ManturovNikonov}
For a positive integer $n$ and for $i, j,k \in \{1, \cdots ,n\}$ such that $|\{i,j,k \} | =3$, $w_{(i,j,k)}$ is an invariant for $H$ of $G_{n}^{3}$ in a good condition.

\end{prop}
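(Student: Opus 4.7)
The approach is to verify that each of the three defining relations of $G_{n}^{3}$, viewed as a local move on words, leaves the element $w_{(i,j,k)}(\beta)\in F_{n}^{3}$ unchanged, using the defining relation $\sigma^{2}=1$ of $F_{n}^{3}$ where needed. Since any two words representing the same element of $G_{n}^{3}$ differ by a finite sequence of such moves, this yields that $w_{(i,j,k)}$ descends to a well-defined map on $H\subset G_n^3$. Throughout I fix the triple $(i,j,k)$ and use the abbreviations $N_{ijl}, N_{ikl}, N_{jkl}$ from the definition.

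For the involutivity relations $a_{m}^{2}=1$: if $m=\{i,j,k\}$, the two inserted copies of $a_{ijk}$ are adjacent and nothing between them touches any of the counts $N_{ijl}, N_{ikl}, N_{jkl}$, so they produce identical index maps whose product is the identity in $F_{n}^{3}$. If $|m\cap\{i,j,k\}|\leq 1$, then $m$ is none of the counted triples $\{i,j,l\},\{i,k,l\},\{j,k,l\}$, so no relevant count changes. If $|m\cap\{i,j,k\}|=2$, say $m=\{i,j,l\}$, then at every subsequent $a_{ijk}$ the count $N_{ijl}$ shifts by $2\equiv 0\pmod 2$, so every $i_c$ is preserved. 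For far commutation $a_{\{p,q,r\}}a_{\{s,t,u\}}=a_{\{s,t,u\}}a_{\{p,q,r\}}$ with $|\{p,q,r\}\cap\{s,t,u\}|<2$: letter totals are preserved, so counts at $a_{ijk}$ crossings outside the block are unaffected; and if the block itself contains an $a_{ijk}$, then by hypothesis the partner letter shares at most one index with $\{i,j,k\}$, hence is not of any counted type, and moving it from one side of $a_{ijk}$ to the other leaves $i_c$ intact.

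The central case is the tetrahedral relation $a_{\{i',j',k'\}}a_{\{i',j',l'\}}a_{\{i',k',l'\}}a_{\{j',k',l'\}}=a_{\{j',k',l'\}}a_{\{i',k',l'\}}a_{\{i',j',l'\}}a_{\{i',j',k'\}}$. Each of the four generators appears exactly once on each side, so totals in the block are balanced and the counts at every $a_{ijk}$ outside the block agree. Inside the block there is at most one occurrence of $a_{ijk}$, and there is exactly one precisely when $\{i,j,k\}\subset\{i',j',k',l'\}$; write $l$ for the remaining index. On the left this $a_{ijk}$ comes first, while on the right it comes last, preceded by one copy each of $a_{\{j',k',l'\}}, a_{\{i',k',l'\}}, a_{\{i',j',l'\}}$, which bump $N_{jkl}, N_{ikl}, N_{ijl}$ by $1$ apiece. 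Substituting into
\begin{equation*}
i_{c}(l)=(N_{jkl}+N_{ijl},\; N_{ikl}+N_{ijl}),
\end{equation*}
each coordinate gains $1+1\equiv 0\pmod 2$, while $i_c(l'')$ for $l''\neq l$ is unchanged, so $i_c$ is the same on both sides. This mod-$2$ cancellation is the crux of the argument and is the main obstacle: it is precisely what dictates the paired form of $i_c$, since incrementing all three of $N_{ijl}, N_{ikl}, N_{jkl}$ by one must have trivial effect for invariance under the tetrahedral move to hold. The verifications for the first two relations are, by comparison, routine bookkeeping.
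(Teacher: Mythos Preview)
The paper does not supply its own proof of this proposition: it is quoted as a result of \cite{ManturovNikonov} and left without argument. So there is no in-paper proof to compare against, and your write-up is effectively filling in the omitted verification.

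Your argument is correct in substance. The case analysis for $a_m^2=1$ and for the far-commutation relation is clean and complete. In the tetrahedral case your computation is right, but your phrasing ``On the left this $a_{ijk}$ comes first, while on the right it comes last'' presumes that the fixed triple $\{i,j,k\}$ happens to be $\{i',j',k'\}$. In general $\{i,j,k\}$ can be any one of the four faces of $\{i',j',k',l'\}$, so $a_{ijk}$ may sit in any of the four positions on the left and in the mirror position on the right. The fix is immediate and is exactly the mechanism you already identified: passing from the left word to the reversed word, each of the other three block letters $a_{ijl},a_{ikl},a_{jkl}$ switches from ``before $a_{ijk}$'' to ``after $a_{ijk}$'' or vice versa, so each of $N_{ijl},N_{ikl},N_{jkl}$ flips parity once, and hence both coordinates of $i_c(l)=(N_{jkl}+N_{ijl},\,N_{ikl}+N_{ijl})$ change by $1+1\equiv 0\pmod 2$. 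With that one-line generalisation your proof is complete. (Incidentally, your verification nowhere uses the good-condition hypothesis; the restriction to $H$ in the statement is not needed for well-definedness itself.)
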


\begin{lem}\label{failrecognizebrunnian}
For a Brunnian $\beta \in PB_{n}$, $w_{(i,j,k)}(\phi_{n}(\beta)) =1$.
\end{lem}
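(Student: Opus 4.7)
The plan is to work coordinate-by-coordinate in $F_{n}^{3}$. For each $l\in\{1,\dots,n\}\setminus\{i,j,k\}$, I would fix the coordinate projection $\pi_{l}:F_{n}^{3}\to(\mathbb{Z}_{2})^{*4}\cong F_{4}^{3}$ sending a generator $\sigma$ (a map $\{1,\dots,n\}\setminus\{i,j,k\}\to\mathbb{Z}_{2}\times\mathbb{Z}_{2}$) to its value $\sigma(l)\in\mathbb{Z}_{2}\times\mathbb{Z}_{2}$, regarded as a generator of the four-fold free product. This is a well-defined homomorphism because the only defining relations $\sigma^{2}=1$ are preserved under restriction. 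The goal then becomes to show $\pi_{l}(w_{(i,j,k)}(\phi_{n}(\beta)))=1$ for every $l$, and from that to read off the triviality of $w_{(i,j,k)}(\phi_{n}(\beta))$ in $F_{n}^{3}$.

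The reduction to four strands goes by composing strand-removal maps: for every $m\notin\{i,j,k,l\}$ the map $q_{m}$ is a homomorphism $G_{n}^{3}\to G_{n-1}^{3}$, and composing all such $q_{m}$ in any fixed order gives $Q_{l}:G_{n}^{3}\to G_{4}^{3}$. By Theorem~\ref{thm_pres_brun} the braid $\phi_{n}(\beta)$ is Brunnian in $G_{n}^{3}$, so already the first application of any $q_{m}$ produces $1$; hence $Q_{l}(\phi_{n}(\beta))=1$ in $G_{4}^{3}$. The proposition of Manturov--Nikonov giving that $w_{(i,j,k)}$ is an invariant then yields $w_{(i,j,k)}(Q_{l}(\phi_{n}(\beta)))=1$ in $F_{4}^{3}$.

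The key identification is $\pi_{l}(w_{(i,j,k)}(\phi_{n}(\beta)))=w_{(i,j,k)}(Q_{l}(\phi_{n}(\beta)))$. Writing $\phi_{n}(\beta)$ as a word in the generators $a_{pqr}$, each letter from the subalphabet $\{a_{ijk},a_{ijl},a_{ikl},a_{jkl}\}$ involves only strand indices in $\{i,j,k,l\}$ and is therefore preserved (up to strand reindexing) by every $q_{m}$ with $m\notin\{i,j,k,l\}$, and the relative positions of these letters are unchanged. All remaining letters of $\phi_{n}(\beta)$ are either killed by some $q_{m}$ or reindexed without contributing to the counts $N_{ijl},N_{ikl},N_{jkl}$ that enter the value $i_{c_{s}}(l)$. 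Consequently, at each $a_{ijk}$-crossing $c_{s}$ the generator $i_{c_{s}}(l)\in\mathbb{Z}_{2}\times\mathbb{Z}_{2}$ is the same as in $Q_{l}(\phi_{n}(\beta))$, so the two MN-products coincide after applying $\pi_{l}$. Combined with the previous paragraph this gives $\pi_{l}(w_{(i,j,k)}(\phi_{n}(\beta)))=1$ for every $l$.

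The main obstacle is the final step: deducing $w_{(i,j,k)}(\phi_{n}(\beta))=1$ in $F_{n}^{3}$ from coordinate-wise triviality, since in general the joint projection $\prod_{l}\pi_{l}$ is not injective on the free product $F_{n}^{3}$. To close this gap I would exploit the specific structure of $\phi_{n}$: the palindromic shape $\phi_{n}(b_{ij})=(c^{n}_{i,i+1})^{-1}\cdots(c^{n}_{i,j})^{2}\cdots c^{n}_{i,i+1}$ forces the $a_{ijk}$-occurrences to appear in matched pairs with equal $i_{c}$-values, and the Brunnian hypothesis ensures that the outer commutator structure of $\beta$ propagates these pairings so that the coordinate-wise cancellations can be performed simultaneously, yielding a single reduction of the product $i_{c_{1}}\cdots i_{c_{m}}$ to the empty word in $F_{n}^{3}$.
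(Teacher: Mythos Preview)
Your coordinate-wise reduction is correct as far as it goes: the identity $\pi_{l}\circ w_{(i,j,k)}=w_{(i',j',k')}\circ Q_{l}$ holds at the word level, and Theorem~\ref{thm_pres_brun} indeed gives $Q_{l}(\phi_{n}(\beta))=1$ once $n\ge 5$. The problem is exactly the gap you name, and your proposed patch does not close it. The intersection $\bigcap_{l}\ker\pi_{l}$ is genuinely nontrivial in $F_{n}^{3}$ for $n\ge 5$. For a concrete witness with $n=5$ (two coordinates $l_{1},l_{2}$), take the four distinct generators $a,b,c,d$ of $F_{5}^{3}$ with $(a(l_{1}),a(l_{2}))=((0,0),(0,0))$, $(b(l_{1}),b(l_{2}))=((0,0),(0,1))$, $(c(l_{1}),c(l_{2}))=((0,1),(0,0))$, $(d(l_{1}),d(l_{2}))=((0,1),(0,1))$. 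Then $abdc$ is reduced in $F_{5}^{3}$, yet $\pi_{l_{1}}(abdc)=(0,0)(0,0)(0,1)(0,1)=1$ and $\pi_{l_{2}}(abdc)=(0,0)(0,1)(0,1)(0,0)=1$. So coordinate-wise triviality says nothing, and you must produce a single cancellation scheme in $F_{n}^{3}$ itself.

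Your last paragraph gestures at this but does not do it. Two specific issues: Brunnian braids are not in general iterated commutators, so ``the outer commutator structure of $\beta$'' is not something you can invoke; and ``palindromic shape forces matched pairs'' is exactly what has to be \emph{proved} by looking at where the $a_{ijk}$'s sit inside each $\phi_{n}(b_{lm})$. The paper's argument bypasses the coordinate projections entirely and proves the stronger local statement: for every generator $b_{lm}$ with $|\{l,m\}\cap\{i,j,k\}|<2$, the block $\phi_{n}(b_{lm}^{\pm 1})$ can be excised from the middle of any word without changing $w_{(i,j,k)}$, because (i) it is itself in good condition (so external $N$-counts are unaffected) and (ii) its internal $a_{ijk}$-occurrences cancel in adjacent pairs with equal $i_{c}$ in $F_{n}^{3}$, which is verified by a short case split on how $\{l,m\}$ meets $\{i,j,k\}$ using the explicit form $(c^{n}_{l,l+1})^{-1}\cdots(c^{n}_{l,m})^{2}\cdots c^{n}_{l,l+1}$. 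Once that is established, fix any $l\notin\{i,j,k\}$, delete all $b_{lm}^{\pm 1}$ from $\beta$ to obtain $\beta_{l}$; the local statement gives $w_{(i,j,k)}(\phi_{n}(\beta))=w_{(i,j,k)}(\phi_{n}(\beta_{l}))$, and the Brunnian hypothesis gives $\beta_{l}=1$. That direct computation is what is missing from your outline.
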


\begin{proof}
It is sufficient to show that $w_{(ijk)}(\phi_{n}(b_{lm})) =1$ for $|\{l,m \} \cap \{i,j,k\}|<2$.
For a Brunnian braid $\beta \in PB_{n}$ and for $l \not\in \{i,j,k\}$, let $\beta_{l}$ be a braid obtained by omitting $b_{lm}$ from $\beta$. Since $w_{(ijk)}(\phi_{n}(b_{lm})) =1$ for $|\{l,m \} \cap \{i,j,k\}|<2$,
$$w_{(ijk)}(\phi_{n}(\beta)) = w_{(ijk)}(\phi_{n}(\beta_{l})).$$
Since $\beta$ is Brunnian, $\beta_{l}$ is trivial and $w_{(ijk)}(\phi_{n}(\beta)) = w_{(ijk)}(\phi_{n}(\beta_{l})) =1.$ Now we will show the statement is true. By the definition of $\phi_{n}$,
$$\phi_{n}(b_{ij}) = (c^{n}_{i,i+1})^{-1}(c^{n}_{i,i+2})^{-1} \cdots (c^{n}_{i,j-1})^{-1} (c^{n}_{i,j})^{2} c^{n}_{i,j-1} \cdots c^{n}_{i,i+2} c^{n}_{i,i+1}.$$  
Notice that for $|\{l,m\} \cap \{i,j,k\}| \neq 2$, $c^{n}_{l,m}$ contains no $a_{ijk}$. There are 9 subcases:
\begin{enumerate}
\item $\{ l, m \} \cap \{i,j,k\} = \emptyset$,
\item $l=i, 0<m<i$, 
\item  $l=i, i<m<j$,
\item $l=i, j<m<k$,
\item $l=i, k<m<n$,
\item $l=j, j<m<k$,
\item  $l=j, k<m<n$,
\item   $l=k, k<l<n$. 
\end{enumerate}
If $\{ l, m \} \cap \{i,j,k\} = \emptyset$, then $\phi_{n}(b_{ij})$ has no $c^{n}_{i,j}$, $c^{n}_{i,k}$ and $c^{n}_{j,k}$. Hence $w_{(ijk)}(\phi_{n}(b_{lm})) =1$. Analogously  $w_{(ijk)}(\phi_{n}(b_{lm})) =1$ in the case of (2),(3),(6) and (8). \\
If $l=i, j<m<k$, then
$$\phi_{n}(b_{im}) = (c^{n}_{i,i+1})^{-1} \cdots (c^{n}_{i,j})^{-1} \cdots (c^{n}_{i,j-1})^{-1} (c^{n}_{i,m})^{2} c^{n}_{i,j-1} \cdots c^{n}_{i,j} \cdots c^{n}_{i,i+1},$$ 
has just two $a_{ijk}$, say $c_{1}=a_{ijk}$ in $(c^{n}_{i,j})^{-1}$ and  $c_{2}=a_{ijk}$ in $c^{n}_{i,j}$, respectively. Since the number of each $a_{stu}$ between $c_{1}$ and $c_{2}$ is even for every $s,t,u \in \{1, \cdots n\}$, $i_{c_{1}} = i_{c_{2}}$. Therefore $w_{(ijk)}(\phi_{n}(b_{lm})) =1$. Analogously $w_{(ijk)}(\phi_{n}(b_{lm})) =1$ in the cases of (4) and (7). \\
If $l=i, k<m<n$, 
$$\phi_{n}(b_{im}) = (c^{n}_{i,i+1})^{-1} \cdots (c^{n}_{i,j})^{-1} \cdots (c^{n}_{i,k})^{-1} \cdots (c^{n}_{i,m})^{2} \cdots c^{n}_{i,k} \cdots c^{n}_{i,j} \cdots c^{n}_{i,i+1},$$ 
and it has four $a_{ijk}$, say $c_{1} = a_{ijk}$ in $(c^{n}_{i,j})^{-1}$, $c_{2} = a_{ijk}$ in $(c^{n}_{i,k})^{-1}$, $c_{3} = a_{ijk}$ in $c^{n}_{i,k}$ and $c_{4} = a_{ijk}$ in $c^{n}_{i,j}$, respectively. Since the number of each $a_{stu}$ between $c_{2}$ and $c_{3}$ is even for every $s,t,u \in \{1, \cdots n\}$, $i_{c_{2}} = i_{c_{3}}$. Similarly, $i_{c_{1}} = i_{c_{4}}$. Therefore $w_{ijk}(\phi_{n}(b_{im})) = i_{c_{1}} i_{c_{2}} i_{c_{3}} i_{c_{4}}=1$.
\end{proof}

By the above Lemma the MN-invariant for $G_{n}^{3}$ does not recognize the non-triviality of Brunnian braids in $PB_{n}$. Now we have parity for $G_{n}^{2}$ and we can extend MN-invariant for $G_{n}^{2}$ by using parity. To make things clearer, let us first consider the case of $G_{n}^{2}$. Let $\beta$ be a free braid on $n$ strands in a good condition. For each classical crossing $c$ of $\beta$ of type $(i,j)$ and for $ k \in \{1,2,\cdots, n\} \backslash \{i,j\}$, define $i^{i}_{c}(k)$ by the sum of number of all crossings of type $(i,k)$ from the start of $i$-th strand to the crossing $c$. Set $i_{c}(k) = i^{i}_{c}(k) +i^{j}_{c}(k)$ modulo $\mathbb{Z}_{2}$, for example, see Fig.~\ref{exa_ic}.
\begin{figure}[h!]
\begin{center}
 \includegraphics[width =10cm]{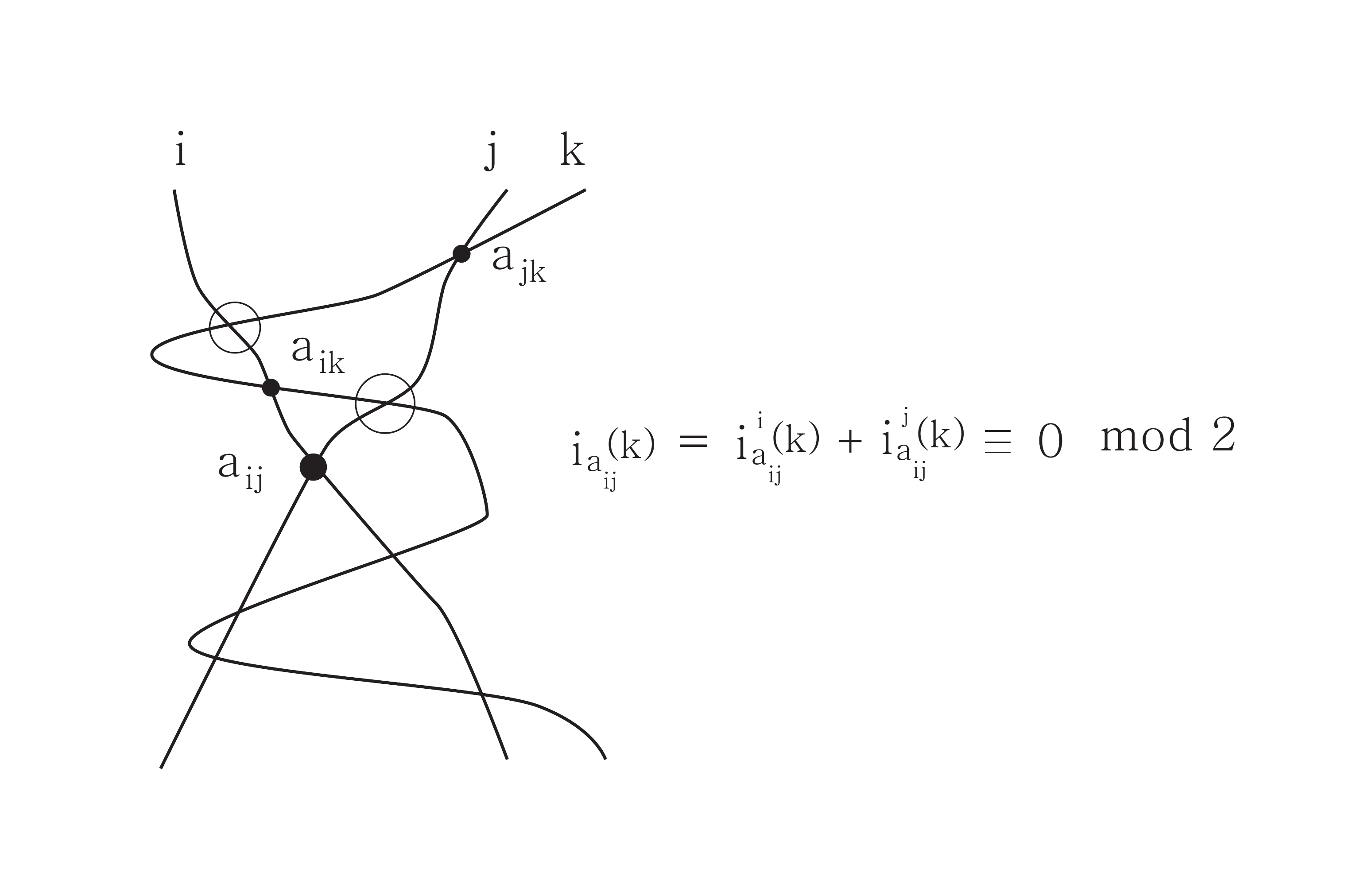}

\end{center}

 \caption{The value of $i_{a_{ij}}(k)$}\label{exa_ic}
\end{figure}
Note that $i_{c}$ can be considered as a map from $\{1,2,\cdots, n \} \backslash \{i,j\} $ to $\mathbb{Z}_{2}$.
 Fix $i,j\in \{1,\dots, n\}$ such that $i \neq j$. Let $\{c_{1}, \cdots, c_{m}\}$ be the set of classical crossings of type $(i,j)$ such that for each $k , l \in \{1, 2, \cdots, m\}$, $k < l$ if and only if we meet $c_{k}$ earlier than $c_{l}$ in $\beta$. 
  Let us consider $F_{n}^{2}$ the free product of groups $\mathbb{Z}_{2}$ generated by $\{ \sigma ~|~ \sigma : \{1,2,\cdots n\} \backslash \{i,j\} \rightarrow \mathbb{Z}_{2} \}$ with relations $\{ \sigma^{2} = 1\}$. Note that $i_{c}$ is a mapping from $\{1,2,\cdots n\} \backslash \{i,j\} \rightarrow \mathbb{Z}_{2}$ and $i_{c}$ is in  $F_{n}^{2}$. In other words, we deal with a free product of $2^{(n-2)}$ copies of $\mathbb{Z}_{2}$. Define a word $w_{(i,j)}(\beta)$ in $F_{n}^{2}$ for $\beta$ by $w_{(i,j)}(\beta) = i_{c_{1}}i_{c_{2}} \cdots i_{c_{m}}$.
 
\begin{prop}\cite{ManturovNikonov}
For a positive integer $n$ and for $i, j \in \{1, \cdots ,n\}$ such that $ i \neq j$, $w_{(i,j)}$ is an invariant for oriented enumerated free braids in a good condition.

\end{prop}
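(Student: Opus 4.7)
The plan is to verify that $w_{(i,j)}(\beta)$ is unchanged under each of the three defining Artin moves of $G_{n}^{2}$; combined with the earlier remark that ``good condition'' is preserved by those moves, this yields the invariance. First I would dispose of relation (1), $a_{ij}^{2}=1$: two adjacent $(i,j)$-crossings in the diagram carry the same $i_{c}$ (nothing between them can alter either $i^{i}_{c}$ or $i^{j}_{c}$), so they contribute a subword $\sigma\sigma$ to $w_{(i,j)}$, which is trivial in $F_{n}^{2}$. One also checks that for any other pair $\{a,b\}\ne\{i,j\}$ the removed pair contributes an even count to strand $a$ and to strand $b$, so no subsequent $i_{c}$ changes.

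Next I would handle relation (2), the far commutativity $a_{ij}a_{kl}=a_{kl}a_{ij}$ with $\{i,j\}\cap\{k,l\}=\emptyset$. The key observation is that a $(k,l)$-crossing lies on neither strand $i$ nor strand $j$, hence cannot affect any quantity $i^{i}_{c}(\ell)$ or $i^{j}_{c}(\ell)$. Thus neither the $i_{c}$ of the swapped $(i,j)$-crossing nor the $i_{c}$ of any later $(i,j)$-crossing changes, and $w_{(i,j)}$ is untouched.

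The main obstacle is the tetrahedral relation (3), $a_{pq}a_{pr}a_{qr}=a_{qr}a_{pr}a_{pq}$, which I would split by cases on $|\{i,j\}\cap\{p,q,r\}|$. When the intersection has size at most $1$, no $(i,j)$-crossing sits in the triple, so only subsequent crossings could be affected; but each strand meets exactly the same multiset of local crossings on both sides of the move, so every later $i_{c}$ is preserved. The delicate case is $\{i,j\}\subset\{p,q,r\}$, say $\{i,j\}=\{p,q\}$. Here the single $(p,q)$-crossing in the triple is the only one contributing a letter to $w_{(i,j)}$, and it moves from first position on the left to last position on the right. The only coordinate of its $i_{c}$ that might change is $\ell=r$: on the right-hand side it picks up one extra $(p,r)$-crossing on strand $p$ and one extra $(q,r)$-crossing on strand $q$, giving a total shift of $1+1\equiv 0 \pmod 2$, while for $\ell\ne r$ the value is trivially unchanged. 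The contribution of the triple to each strand parity is the same on both sides, so later $(i,j)$-crossings see identical counts.

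Putting the three moves together shows $w_{(i,j)}$ descends to a well-defined map on equivalence classes. Essentially all of the content of the proof lives in the $1+1\equiv 0$ cancellation driving the $\{i,j\}\subset\{p,q,r\}$ subcase of relation (3); once one is careful about which local crossing contributes to which strand, everything else is bookkeeping.
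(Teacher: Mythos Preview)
The paper does not actually supply its own proof of this proposition: it is stated with the citation \cite{ManturovNikonov} and left unproved. So there is nothing in the text to compare your argument against directly. That said, your case-by-case verification against the three relations of $G_{n}^{2}$ is correct and is precisely the expected argument; the heart of the matter is indeed the $1+1\equiv 0\pmod 2$ cancellation in the tetrahedral relation. One small point: writing ``say $\{i,j\}=\{p,q\}$'' is not a genuine ``without loss of generality'', because in $a_{pq}a_{pr}a_{qr}=a_{qr}a_{pr}a_{pq}$ the three generators occupy non-symmetric positions (the middle letter $a_{pr}$ does not move). The remaining subcases $\{i,j\}=\{p,r\}$ and $\{i,j\}=\{q,r\}$ are handled by the same $+1+(-1)$ or $-1+(-1)$ parity count and you should say so explicitly.

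For context, the paper \emph{does} prove the parity-enhanced analogue $w^{p}_{ij}$ in Lemma~\ref{invGnp2}, and the strategy there is exactly yours: check invariance relation by relation, with the only nontrivial step being the third relation, where one tracks which of $N^{0}_{ik},N^{0}_{jk},N^{1}_{jk}$ shift and observes that the relevant sum is unchanged modulo~$2$. Your proof is the specialisation of that argument to the unlabelled case.
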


Let $\beta \in G_{n,p}^{2}$. For fixed pair $i,j \in \{1, \cdots, n\}$ and for $k \in \{1, \cdots, n\} \backslash \{i,j\}$, define $i_{a_{ij}^{\epsilon}}^{p}(k)$ for each $a_{ij}^{\epsilon}$ in $\beta$ by 

\begin{center}
$ i_{a_{ij}^{\epsilon}}^{p}(k) = \left \{
\begin{array}{cc} 
    N_{ik}^{0}+N_{jk}^{0} ~ mod ~2 & \text{if}~\epsilon =0, \\
    N_{ik}^{0}+N_{jk}^{1} ~ mod~ 2 &  \text{if}~\epsilon =1. \\
   \end{array}\right. $
\end{center}
where $N_{ik}^{\epsilon}$ is the number of $a_{ik}^{\epsilon}$, which appears before $a_{ij}^{\epsilon}$, for example, see Fig.~\ref{exa_ic_parity}.
\begin{figure}[h!]
\begin{center}
 \includegraphics[width =10cm]{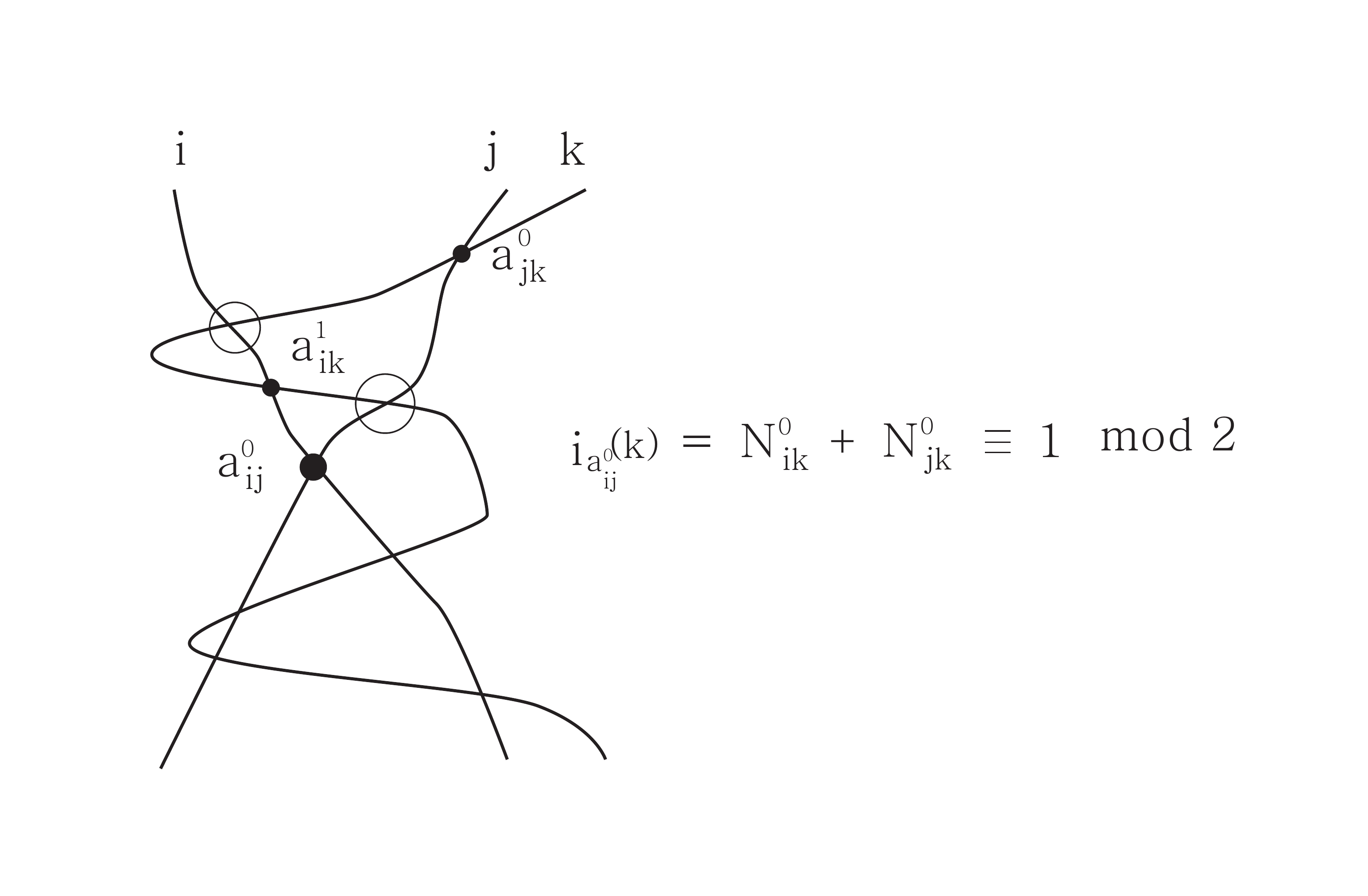}

\end{center}

 \caption{The value of $i^{p}_{a_{ij}}(k)$}\label{exa_ic_parity}
\end{figure}

Let $\{c_{1}, \cdots c_{m}\}$ be the ordered set of $a_{ij}^{\epsilon_{m}}$'s such that the order agrees with the order of position of $a_{ij}^{\epsilon}$'s. Define $w^{p}_{ij} : G_{n,p}^{2} \rightarrow F_{n}^{2}$ by $w^{p}_{ij}(\beta) =\prod_{s=1}^{m} i_{c_{s}}^{p} $.(The superscript $p$ means `parity'.)
\begin{lem}\label{invGnp2}
$w^{p}_{ij}$ is well defined.
\end{lem}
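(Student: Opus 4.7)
The plan is to verify invariance of $w^p_{ij}$ under each of the three defining relations of $G_{n,p}^2$. Since $F_n^2$ is a free product of copies of $\mathbb{Z}_2$, the letter $i^p_c$ contributed by a type-$(i,j)$ crossing cancels its neighbour exactly when they coincide. For each relation it therefore suffices to check (a) that the letters contributed by type-$(i,j)$ crossings inside the rewritten subword agree on the two sides, and (b) that the counts $N_{ik}^\epsilon$, $N_{jk}^\epsilon$ read off by any type-$(i,j)$ crossing lying after the rewritten subword are preserved.

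Relations (1) and (2) are bookkeeping. For $(a_{st}^\epsilon)^2=1$: if $\{s,t\}=\{i,j\}$ the two successive crossings see the same preceding subword and therefore contribute equal letters, cancelling in $F_n^2$; if $\{s,t\}\ne\{i,j\}$ no letter is added, and each count $N_{\bullet k}^\epsilon$ felt by a later crossing changes by $0$ or $\pm 2$, hence is unchanged mod $2$. For the commutation of $a_{st}^{\epsilon_{st}}$ and $a_{uv}^{\epsilon_{uv}}$ with $\{s,t\}\cap\{u,v\}=\emptyset$: at most one generator is of type $(i,j)$, and if one is, the other has index set disjoint from $\{i,j\}$, contributes nothing to the relevant counts, and may be slid past without affecting $i^p_c$.

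The crux is relation (3),
\[
a_{st}^{\epsilon_{st}}a_{su}^{\epsilon_{su}}a_{tu}^{\epsilon_{tu}}=a_{tu}^{\epsilon_{tu}}a_{su}^{\epsilon_{su}}a_{st}^{\epsilon_{st}},\qquad \epsilon_{st}+\epsilon_{su}+\epsilon_{tu}\equiv 0\pmod 2.
\]
When $|\{s,t,u\}\cap\{i,j\}|\le 1$, no type-$(i,j)$ letter is contributed and the multiset of generators inside the subword is identical on the two sides, so (a) is vacuous and (b) immediate. The delicate sub-case is $\{s,t,u\}=\{i,j,k\}$, where exactly one crossing $c$ of the triple has type $(i,j)$. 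Its value $i^p_c(\ell)$ for $\ell\ne k$ is unaffected since none of the three generators touches any $N_{i\ell}^\epsilon$ or $N_{j\ell}^\epsilon$; moving $c$ from the front to the back of the triple alters $i^p_c(k)$ by
\[
[\epsilon_{ik}=0]+[\epsilon_{jk}=\epsilon_{ij}] \pmod 2,
\]
read directly from the definition of $i^p_c$. A four-line case split in $\epsilon_{ij}\in\{0,1\}$, using $\epsilon_{ik}+\epsilon_{jk}\equiv\epsilon_{ij}\pmod 2$, shows this quantity is $0$ in every admissible configuration; (b) is then automatic because the triple contains exactly one $a_{ik}^{\epsilon_{ik}}$ and one $a_{jk}^{\epsilon_{jk}}$ in either order.

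The main obstacle is this last sub-case: the parity constraint $\epsilon_{ij}+\epsilon_{ik}+\epsilon_{jk}\equiv 0$ is precisely what forces the mod-$2$ cancellation, and the asymmetric superscript choice $N_{ik}^0+N_{jk}^\epsilon$ in the definition of $i_c^p$ is exactly what makes the cancellation go through in the $\epsilon_{ij}=1$ branch; with a symmetric choice the same calculation would fail.
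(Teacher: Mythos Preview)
Your proposal is correct and follows essentially the same approach as the paper: a relation-by-relation check, with the only nontrivial case being relation (3) when the triple contains both $i$ and $j$, resolved by the parity constraint $\epsilon_{ij}+\epsilon_{ik}+\epsilon_{jk}\equiv 0\pmod 2$. Your use of the Iverson-bracket expression $[\epsilon_{ik}=0]+[\epsilon_{jk}=\epsilon_{ij}]$ packages the four sub-cases more compactly than the paper's explicit split on $\epsilon_{ij}\in\{0,1\}$, but the underlying verification is identical.
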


\begin{proof}
It suffices to show that the image of $w^{p}_{ij}$ does not change when relations of $G_{n,p}^{2}$ are applied to $\beta$. In the cases of $(a_{ij}^{\epsilon})^{2}=1$ and $a_{ij}^{\epsilon_{1}}a_{kl}^{\epsilon_{2}}= a_{kl}^{\epsilon_{2}}a_{ij}^{\epsilon_{1}}$, it is easy. For relations $a_{ij}^{\epsilon_{ij}}a_{ik}^{\epsilon_{ik}}a_{jk}^{\epsilon_{jk}} = a_{jk}^{\epsilon_{jk}}a_{ik}^{\epsilon_{ik}}a_{ij}^{\epsilon_{ij}}$, where $\epsilon_{ij}+\epsilon_{ik}+\epsilon_{jk}=0$ mod $2$, suppose that the relation is not applied on $c_{s}$. Then the number of $a_{ik}^{\epsilon}$ and $a_{jk}^{\epsilon}$ before $c_{s}$ remains and then $w^{p}_{ij}(\beta)$ does not change. Suppose that $c_{s}$ is in the applied relation, say $c_{s}^{\epsilon_{1}}a_{il}^{\epsilon_{2}}a_{jl}^{\epsilon_{3}} = a_{jl}^{\epsilon_{3}}a_{il}^{\epsilon_{2}}c_{s}^{\epsilon_{1}}$, where ${\epsilon_{1}}+{\epsilon_{2}}+{\epsilon_{3}}=0$ mod $2$. If $l \neq k$, then  the number of $a_{ik}^{\epsilon}$ and $a_{jk}^{\epsilon}$ before $c_{s}$ is not changed. Suppose that $l=k$. If $\epsilon_{1}=0$, then $\epsilon_{2} = \epsilon_{3} =0 $ or  $\epsilon_{2} = \epsilon_{3} =1$. Then the sum of the number of $a_{ik}^{0}$ and $a_{jk}^{0}$ remains modulo $2$. If $\epsilon_{1}=1$, then  $\epsilon_{2} = 1,\epsilon_{3} =0 $ or  $\epsilon_{2} = 0$,$\epsilon_{3} =1$. Then $i_{k}^{p}(c_{s}) = N_{ik}^{0}+N_{jk}^{1} $ is not changed modulo $2$ and $w^{p}_{ij}(\beta)$ is not changed. Therefore $w^{p}_{ij}$ is an invariant.
\end{proof}
Let $H_{n+1}^{2}$ be a subgroup of all elements in good condition in $G_{n+1}^{2}$. Now we define a mapping $\phi_{k}$ from $H_{n+1}^{2}$ to $G_{n,p}^{2}$ for a fixed $k \in \{1, \cdots n+1 \}$. Roughly speaking, $\phi_{k}$ deletes $k$-th strands from $\beta$, counts how many times $k$-th strand is linked with $i_{l}$-the and $j_{l}$-th strands before $a_{i_{l}j_{l}}$ in $\beta$ and takes the sum of them. In detail, $\phi_{k}$ is defined as followings. Let $\beta = \prod_{l=1}^{m} a_{i_{l}j_{l}}$. Denote $b_{i_{l}j_{l}} $ by 
\begin{center}
$b_{i_{l}j_{l}} = \left \{
\begin{array}{cc} 
    1 & \text{if}~ k \in \{i_{l},j_{l}\} , \\
     a_{i_{l}j_{l}}^{\epsilon_{l}} &  \text{if}~k \notin \{i_{l},j_{l}\}. \\
   \end{array}\right. $
\end{center}
where $\epsilon_{l}$ is the number of $a_{j_{l}k}$ and $a_{i_{l} k }$ in $\beta$ before $ a_{i_{l}j_{l}}$ modulo $2$.
Now, define $\psi_{k}(\beta) = \prod_{l=1}^{m} b_{i_{l}j_{l}}$.

\begin{prop}\cite{Kim}
$\psi_{k}$ is well defined.
\end{prop}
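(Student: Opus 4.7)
The plan is to verify that $\psi_k$ respects each of the three defining relations of $G_{n+1}^{2}$, so that it descends to a well-defined map on $H_{n+1}^{2}$. A guiding observation throughout is that the parity assigned to a letter outside a replaced block depends only on running totals of the form ``number of $a_{im}$ and $a_{jm}$ before this point'', and each of the three relations either leaves those totals unchanged (relations (2) and (3)) or alters them by an even amount (relation (1)); hence it is enough to check the image of the affected block itself against the relations of $G_{n,p}^{2}$.

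For the involutive relation $a_{ij}^{2}=1$, I would split on whether $k\in\{i,j\}$: if so, both copies map to $1$ and there is nothing to check; if not, both copies acquire the \emph{same} exponent $\epsilon$, because no letter strictly between them can affect the relevant count, and they are sent to $a_{ij}^{\epsilon}a_{ij}^{\epsilon}=1$ in $G_{n,p}^{2}$. For the commutation relation with $\{i,j\}\cap\{s,t\}=\emptyset$, no letter from one pair contributes to the count that determines the other's exponent, so swapping preserves both exponents and the commutation relation in $G_{n,p}^{2}$ finishes this case.

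The main case is the braid relation $a_{\alpha\beta}a_{\alpha\gamma}a_{\beta\gamma}=a_{\beta\gamma}a_{\alpha\gamma}a_{\alpha\beta}$ for distinct $\alpha,\beta,\gamma$. If the deleted strand $k$ lies in $\{\alpha,\beta,\gamma\}$, two of the three letters vanish and one directly checks that the surviving letter receives the same exponent on both sides. If $k\notin\{\alpha,\beta,\gamma\}$, all three letters survive, and I would (a) verify that the three exponents $\epsilon_{\alpha\beta},\epsilon_{\alpha\gamma},\epsilon_{\beta\gamma}$ coincide on the two sides of the relation, because no internal letter of the block is of a type which could contribute to another internal letter's count, so each exponent can be evaluated at the start of the block, and (b) check the mod-$2$ condition $\epsilon_{\alpha\beta}+\epsilon_{\alpha\gamma}+\epsilon_{\beta\gamma}\equiv 0$ required to invoke the braid relation in $G_{n,p}^{2}$.

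The main obstacle is step (b): one must observe that each of the three running counts $N_{\alpha k},N_{\beta k},N_{\gamma k}$ of crossings of types $\{\alpha,k\},\{\beta,k\},\{\gamma,k\}$ preceding the block appears in exactly two of the three exponents, so the sum telescopes to $2(N_{\alpha k}+N_{\beta k}+N_{\gamma k})\equiv 0\pmod{2}$ by symmetric cancellation. Once this identity is in hand, the braid relation of $G_{n,p}^{2}$ applies, the two images coincide, and the remaining bookkeeping (that no post-block parity is disturbed, since the multiset of letters inside the block is preserved) is routine.
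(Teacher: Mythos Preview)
The paper does not prove this proposition here; it is cited from \cite{Kim} without argument. Your proof is correct and follows precisely the template the paper itself uses for the analogous map $f:G_{n+1}^{3}\to G_{n,p}^{3}$ in Section~5: check each defining relation of the domain, split on whether the deleted index $k$ lies among the indices of the relation, and in the main case of the braid relation verify the parity constraint $\epsilon_{\alpha\beta}+\epsilon_{\alpha\gamma}+\epsilon_{\beta\gamma}\equiv 0\pmod 2$ via the telescoping sum $2(N_{\alpha k}+N_{\beta k}+N_{\gamma k})$. Two cosmetic remarks: in your opening paragraph the running totals should read $a_{ik},a_{jk}$ rather than $a_{im},a_{jm}$, and for completeness the commutation case should also record what happens when $k$ lies in one of the two pairs (one letter dies and the surviving letter's exponent is unaffected by the swap, since the disjointness hypothesis prevents the deleted letter from contributing to its count); neither is a gap in the argument.
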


\begin{dfn}
Let $\beta \in G_{n+1}^{2}$ be in good condition. Then $w_{ij}^{l} : G_{n+1}^{2} \rightarrow F_{n}^{2}$ is defined by $w_{ij}^{l} = w_{ij}^{p} \circ \psi_{l}$.
\end{dfn}
For example, for $\beta = a_{12}a_{34}a_{13}a_{34}a_{13}a_{12}$, we obtain $\psi_{4}(\beta) = a_{12}^{0}a_{13}^{1}a_{13}^{0}a_{12}^{0}$. Then $w_{12}^{4}(\beta) = w_{12}^{p} \circ \psi_{4}(\beta) = 01 \in F_{3}^{2}$, see Fig.~\ref{exa_psi_parity_inv}.
\begin{figure}[h!]
\begin{center}
 \includegraphics[width =10cm]{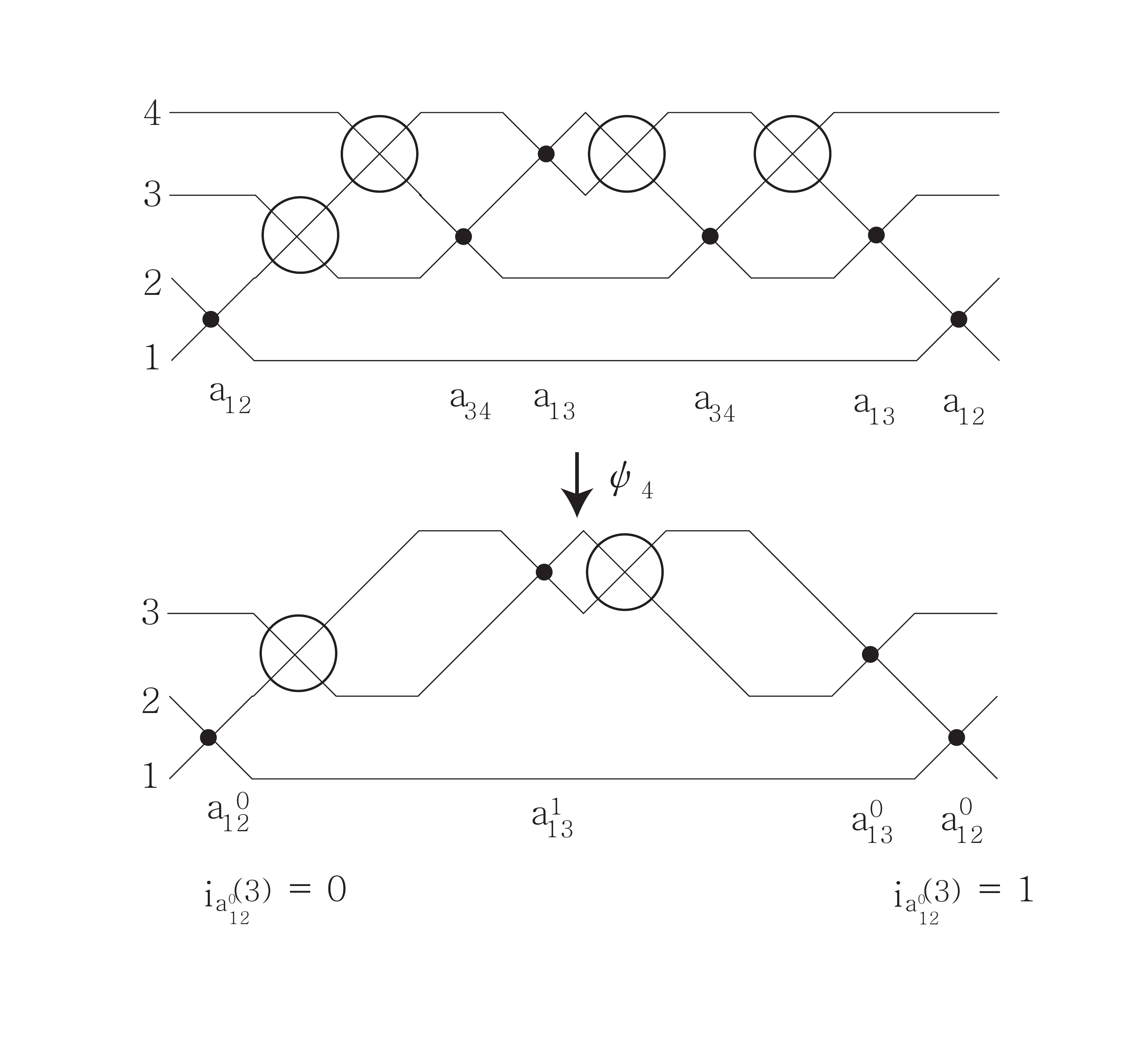}

\end{center}

 \caption{$\beta = a_{12}a_{34}a_{13}a_{34}a_{13}a_{12}$ and  $w_{12}^{4} = 01 \in F_{3}^{2}$ }\label{exa_psi_parity_inv}
\end{figure}

\begin{cor}
$w_{ij}^{l}$ is an invariant for $\beta \in G_{n+1}^{2}$. 
\end{cor}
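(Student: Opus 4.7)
The statement is essentially a composition assertion, so the plan is to observe that $w_{ij}^{l}$ is presented as a composition of two maps each of which has already been shown to descend to the appropriate quotient: $\psi_{l}\colon H_{n+1}^{2}\to G_{n,p}^{2}$ (well defined by the cited Proposition from \cite{Kim}) and $w_{ij}^{p}\colon G_{n,p}^{2}\to F_{n}^{2}$ (well defined by Lemma~\ref{invGnp2}). Since the definition preceding the corollary takes $\beta$ in good condition, $\beta$ lies in $H_{n+1}^{2}$, so $\psi_{l}(\beta)$ is a legitimate element of $G_{n,p}^{2}$, and we may feed it to $w_{ij}^{p}$.

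Concretely, the argument I would give is: suppose $\beta=\beta'$ in $G_{n+1}^{2}$, i.e.\ $\beta$ and $\beta'$ differ by a finite sequence of the defining relations of $G_{n+1}^{2}$. By the well-definedness of $\psi_{l}$, we get $\psi_{l}(\beta)=\psi_{l}(\beta')$ as elements of $G_{n,p}^{2}$, i.e.\ they differ by relations of $G_{n,p}^{2}$. Applying Lemma~\ref{invGnp2} then yields
\[
w_{ij}^{l}(\beta)=w_{ij}^{p}(\psi_{l}(\beta))=w_{ij}^{p}(\psi_{l}(\beta'))=w_{ij}^{l}(\beta'),
\]
so $w_{ij}^{l}$ is invariant, as claimed.

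The only subtlety worth flagging is that this relies on $\beta$ being in good condition; otherwise $\psi_{l}$ is not defined on $\beta$ (the exponents $\epsilon_{l}$ recording the parity of strand-$l$ linkings are a priori only meaningful once we know the ambient relations preserve the counts modulo~$2$). The good-condition hypothesis from the preceding definition takes care of this, and stability of good condition under the $G_{n+1}^{2}$-relations was noted in the earlier remark, so no new work is required. Thus the corollary is immediate from the two previously established results and does not present a genuine obstacle.
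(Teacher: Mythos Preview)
Your proof is correct and matches the paper's own argument: both simply observe that $w_{ij}^{l}=w_{ij}^{p}\circ\psi_{l}$ is a composition of the well-defined map $\psi_{l}$ (the cited Proposition) with the invariant $w_{ij}^{p}$ (Lemma~\ref{invGnp2}), hence is itself an invariant. Your additional remarks on the good-condition hypothesis are accurate elaboration but not a departure from the paper's route.
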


\begin{proof}
Since $\psi_{l}$ are homomorphism, by Lemma~\ref{invGnp2}, $w_{ij}^{l}$ is an invariant for $\beta \in G_{n+1}^{2}$.
\end{proof}

\begin{exa}
Let $X =  a_{12}a_{13}a_{12}a_{13}$ and $Y = a_{23}a_{35}a_{23}a_{35}$ and 
$\beta = [X,Y]$ in $G_{5}^{2}$. Let us show now that $\beta$ is not trivial. To this end, we consider the element $\omega_{12}^{5}$ where the parity is obtained from $5$-th strand. For a pair $(1,2)$, the value of MN-invariant for $G_{n}^{2}$ of $\beta$ is trivial, because $Y$ is in a good condition and $Y$ contains no $a_{12}$. But $w_{12}^{5}(\beta)$ is not trivial. Now we calculate it. Firstly, $$\psi_{5}(\beta) = a_{12}^{0}a_{13}^{0}a_{12}^{0}a_{13}^{0}a_{23}^{0}a_{23}^{1}a_{13}^{0}a_{12}^{0}a_{13}^{0}a_{12}^{0}a_{23}^{1}a_{23}^{0}.$$ Let $c_{1} = a_{12}^{0}$, $c_{2} = a_{12}^{0}$,  $c_{3} = a_{12}^{0}$,  $c_{4} = a_{12}^{0}$ such that the order of $c_{i}$ agrees with the order of $a_{12}^{\epsilon}$ in $\beta$. Then $i_{c_{1}}^{p}(3) = 0$, $i_{c_{2}}^{p}(3) = 1$, $i_{c_{3}}^{p}(3) = 0$, $i_{c_{4}}^{p}(3) = 1$ and $i_{c_{s}}(4)= 0$. Then $w_{12}^{5}(\beta) = \zeta_{(0,0)}\zeta_{(1,0)}\zeta_{(0,0)}\zeta_{(1,0)}$ and it cannot be canceled in $F_{4}^{2}$, where $\zeta_{a,b}$ is defined by $\zeta_{a,b}(3)=a$ and $\zeta_{a,b}(4)=b$. 
\end{exa}

This invariant can be used for $\beta \in G_{n+1}^{3}$ by homomorphism $r_{m} : G_{n+1}^{3} \rightarrow G_{n}^{2}$ defined by
\begin{center}
$ r_{m}(a_{ijk}) = \left \{
\begin{array}{cc} 
   a_{ij} & \text{if}~k=m,i,j<m, \\
   a_{i(j-1)}  & \text{if}~k=m,i<m,j>m, \\
   a_{(i-1)(j-1)}  & \text{if}~k=m,i>m,j>m, \\
    1 &  \text{if}~i,j,k \neq m. \\
   \end{array} \right. $
\end{center}

\begin{exa}
Let 
\begin{eqnarray*}
\beta  &=&a_{124}a_{123}a_{135}a_{134}a_{124}a_{134}a_{135}a_{123}a_{134}a_{135}a_{134}a_{123}a_{135}a_{134}a_{124}a_{134}a_{135}a_{123}\\
&&a_{124}a_{134}a_{135}a_{134},
\end{eqnarray*}
in $G_{5}^{3}$. Then
$$\beta_{1} = r_{1}(\beta) =a_{24}a_{23}a_{35}a_{34}a_{24}a_{34}a_{35}a_{23}a_{34}a_{35}a_{34}a_{23}a_{35}a_{34}a_{24}a_{34}a_{35}a_{23}a_{24}a_{34}a_{35}a_{34}.$$
For index $5$, \\
$$\psi_{5}(\beta_{1}) 
 = a_{24}^{0}a_{23}^{0}a_{34}^{1}a_{24}^{0}a_{34}^{1}a_{23}^{0}a_{34}^{0}a_{34}^{1}a_{23}^{1}a_{34}^{0}a_{24}^{0}a_{34}^{0}a_{23}^{1}a_{24}^{0}a_{34}^{1}a_{34}^{0}.$$
Let $c_{1} = a_{24}^{0}$, $c_{2} = a_{24}^{0}$, $c_{3} = a_{24}^{0}$ and $c_{4} = a_{24}^{0}$ such that the order of $c_{i}$ agrees with the order of $a_{24}^{\epsilon}$ in $\psi_{5}(\beta_{1}) $.
Then \begin{itemize}
\item $i_{c_{1}}(3)= N_{23}^{0}+N_{23}^{0} = 0+0 =0, mod~2$
\item $i_{c_{2}}(3)= N_{23}^{0}+N_{34}^{0}=1+0 =1,mod~2$
\item $i_{c_{3}}(3)= N_{23}^{0}+N_{34}^{0}=2+2 =0,mod~2$
\item $i_{c_{4}}(3)= N_{23}^{0}+N_{34}^{0} = 2+3 =1.mod~2$
\end{itemize}
Therefore $w_{24}^{5}(\beta_{1}) = 0101 \neq 1$ and hence $\beta$ is not trivial in $G_{5}^{3}$. 
\end{exa}

\begin{exa}\label{recog_brun} Let
\begin{eqnarray*}
\beta  &=& [[[b_{12},b_{14}],b_{16}] ,[b_{13},b_{15}]] \\ 
&=& b_{12} b_{14} b_{12}^{-1} b_{14}^{-1} b_{16} b_{14} b_{12} b_{14}^{-1} b_{12}^{-1} b_{16}^{-1}
 b_{13} b_{15} b_{13}^{-1} b_{15}^{-1}b_{16} b_{12} b_{14} b_{12}^{-1} b_{14}^{-1} b_{16}^{-1} b_{14} b_{12} b_{14}^{-1} \\
 &&b_{12}^{-1}b_{15} b_{13} b_{15}^{-1} b_{13}^{-1} \in G_{6}^{3}.
\end{eqnarray*}
  Note that for each $k \in \{ 1,2,3,4,5,6\}$, $p_{k}(\beta) =1$, that is, $\beta$ is Brunnian in $PB_{n}$. Then\\
 $\psi_{6}(r_{1}(\phi_{6}(\beta))) = a_{23}^{0}a_{24}^{0}a_{25}^{0}a_{23}^{1}a_{35}^{1}a_{34}^{1} a_{45}^{0}a_{24}^{0}a_{34}^{0}a_{45}^{1}a_{24}^{1}a_{35}^{1}a_{23}^{1}a_{25}^{0}a_{24}^{0}a_{23}^{0}\\
a_{25}^{1}a_{24}^{1}a_{24}^{0}a_{25}^{1}a_{23}^{0}a_{24}^{0}a_{25}^{0}a_{23}^{1}a_{35}^{1}a_{24}^{1}a_{45}^{1}a_{34}^{0}a_{24}^{0}a_{45}^{0}a_{34}^{1}a_{35}^{1}a_{23}^{1}a_{25}^{0}a_{24}^{0}a_{23}^{0}a_{25}^{1}\\
a_{24}^{1}a_{35}^{1}a_{24}^{1}a_{35}^{1}a_{25}^{0}a_{25}^{1}a_{35}^{1}a_{24}^{1}a_{35}^{1}a_{23}^{1}a_{34}^{0}a_{35}^{0}a_{23}^{0}a_{34}^{1}a_{24}^{1}a_{45}^{1}a_{25}^{0}a_{35}^{0}a_{45}^{0}a_{25}^{1}a_{35}^{1}\\
a_{24}^{1}a_{34}^{1} a_{23}^{0}a_{35}^{0}a_{24}^{0}a_{35}^{0}a_{25}^{1}a_{45}^{0}a_{35}^{1}a_{25}^{0}a_{45}^{1}a_{24}^{1}a_{35}^{0}a_{24}^{1}a_{25}^{1}a_{23}^{1}a_{24}^{0}a_{25}^{0}a_{23}^{0}a_{35}^{0}a_{34}^{0}\\
a_{45}^{0}a_{24}^{0}a_{34}^{1}a_{45}^{1}a_{24}^{1}a_{35}^{0}a_{23}^{0}a_{25}^{0}a_{24}^{0}a_{23}^{1}a_{25}^{1}a_{24}^{1}a_{35}^{0}a_{24}^{1}a_{45}^{1}a_{34}^{1}a_{24}^{0}a_{45}^{0}a_{34}^{0}a_{24}^{1}a_{35}^{0}\\
a_{25}^{1}a_{25}^{1}a_{35}^{0}a_{24}^{1}a_{34}^{0}a_{45}^{0}a_{24}^{0}a_{34}^{1}a_{45}^{1}a_{24}^{1}a_{35}^{0}a_{24}^{1}a_{25}^{1}a_{23}^{1}a_{24}^{0}a_{25}^{0}a_{23}^{0}a_{35}^{0}a_{24}^{1}a_{45}^{1}a_{34}^{1}\\
a_{24}^{0}a_{45}^{0}a_{34}^{0}a_{35}^{0}a_{23}^{0}a_{25}^{0}a_{24}^{0}a_{23}^{1}a_{25}^{1}a_{24}^{1}a_{35}^{0}a_{24}^{1}a_{45}^{1}a_{25}^{0}a_{35}^{1}a_{45}^{0}a_{25}^{1}a_{35}^{0}a_{24}^{1}a_{35}^{0}a_{23}^{0}\\
a_{34}^{1}a_{35}^{1}a_{23}^{1}a_{34}^{0}a_{24}^{1}a_{35}^{0}a_{25}^{1}a_{45}^{0}a_{35}^{1}a_{25}^{0}a_{45}^{1}a_{24}^{1}a_{34}^{0}a_{23}^{1}a_{24}^{1}a_{25}^{1}$\\
and there are 40 $a_{24}^{\epsilon}$. We obtain that 
$$w_{24}^{6} = \zeta_{(0,0)}\zeta_{(0,1)}\zeta_{(1,1)}\zeta_{(0,0)}\zeta_{(0,1)}\zeta_{(1,1)}\zeta_{(0,0)}\zeta_{(0,1)}\zeta_{(0,0)}\zeta_{(0,1)}\zeta_{(1,1)}\zeta_{(0,1)},$$
where $\zeta_{a,b}$ is defined by $\zeta_{a,b}(3)=a$ and $\zeta_{a,b}(5)=b$ and $w_{24}^{p}$ is not trivial in $F_{5}^{2}$. 

\end{exa}


\section{A map from $G_{n+1}^{3}$ to $G_{n,p}^{3}$}
The groups $G_{n}^{k}$, for $k \geq 3,$ must have plentiful information, which can be used for braids. Moreover, it might be possible to define invariants for $G_{n}^{k}$ valued in the free product of $\mathbb{Z}_{2}$ with respect to a `fixed' index. In this section, we define $G_{n,p}^{3}$, which is called {\em $G_{n}^{3}$ with parity}. We construct homomorphism from $G_{n,p}^{3}$ to the free product of $\mathbb{Z}_{2}$, which generates the invariant.

\begin{dfn}
Let $G_{n,p}^{3}$ be a group generated by $\{ a_{ijk}^{\epsilon} | \{i,j,k\} \subset \{1, \cdots, n\}, |\{i,j,k\}|=3, \epsilon \in \{0,1 \} \}$ subject to the following relations:
\begin{enumerate}
\item $(a_{ijk}^{\epsilon})^{2} =1$,
\item $(a_{ijk}^{\epsilon_{1}}a_{lmn}^{\epsilon_{2}})^{2}=1$, if $| \{i,j,k\} \cap \{l,m,n\}  |<2$ for arbitrarily chosen epsilons,
\item $(a_{ijk}^{\epsilon_{l}}a_{ijl}^{\epsilon_{k}}a_{ikl}^{\epsilon_{j}}a_{jkl}^{\epsilon_{i}})^{2} =1$, where $t \geq i,j,k,l$ and $\sum_{s \in \{i,j,k,l \}  \backslash \{t\}}\epsilon_{s} = 0$ mod $2$.
\end{enumerate}

\end{dfn}

Now we define a mapping $f$ from $G_{n+1}^{3}$ to $G_{n,p}^{3}$ as follows : \\
Let $\beta = \prod_{l=1}^{m} a_{i_{l}j_{l}k_{l}}$. For each $a_{i_{l}j_{l}k_{l}}$, denote $b_{i_{l}j_{l}k_{l}} $as follows: 
\begin{center}
$b_{i_{l}j_{l}k_{l}} = \left \{
\begin{array}{cc} 
    1 & \text{if}~n+1 \in \{i_{l},j_{l},k_{l}\} , \\
     a_{i_{l}j_{l}k_{l}}^{\epsilon_{l}} &  \text{if}~n+1 \notin \{i_{l},j_{l},k_{l}\}, \\
   \end{array}\right. $
\end{center}
where $\epsilon_{l}$ is the sum of numbers of $a_{j_{l}k_{l}(n+1)}$ and $a_{i_{l}k_{l}(n+1)}$ in $\beta$ before $ a_{i_{l}j_{l}k_{l}}$ modulo $2$. This is derived from MN-invariant for $G_{n}^{3}$ : 
$$N_{jkl} + N_{ijl} + N_{ikl}+N_{ijl} = N_{jkl} + N_{ikl}~ \text{mod}~2.$$
Now, define $f(\beta) = \prod_{l=1}^{m} b_{i_{l}j_{l}k_{l}}$ .
\begin{lem}
The mapping $f : G_{n+1}^{3} \rightarrow G_{n,p}^{3}$ is well defined.
\end{lem}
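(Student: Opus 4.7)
To prove that $f$ is well-defined, the plan is to verify that for each of the three defining relations of $G_{n+1}^3$, whenever two words $\beta$ and $\beta'$ in the generators differ by a single application of the relation, one has $f(\beta) = f(\beta')$ in $G_{n,p}^3$. Write $\beta = \alpha w \gamma$ and $\beta' = \alpha w' \gamma$ with $w = w'$ an instance of the relation. The crucial preliminary observation, mirroring the strategy of Lemma~\ref{invGnp2}, is that in each of the three relations the two sides consist of the same multiset of generators. Consequently, for every letter in the suffix $\gamma$, the counts $N_{ab(n+1)}$ of generators appearing before it in $\beta$ and in $\beta'$ agree, so the parity labels assigned to letters in $\gamma$ coincide. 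Hence it suffices to show $f_\alpha(w) = f_\alpha(w')$ in $G_{n,p}^3$, where $f_\alpha$ denotes the image computed with $\alpha$ fixed as prefix.

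Relations (1) and (2) reduce to short case-checks. For (1), either $n+1 \in \{i,j,k\}$ (both copies of $a_{ijk}$ send to $1$) or the two copies are adjacent and receive equal parity labels, in which case $(a_{ijk}^\epsilon)^2 = 1$ is the first relation of $G_{n,p}^3$. For (2), the subcases are distinguished by how many of $\{i,j,k\}, \{s,t,u\}$ contain $n+1$; the only delicate point is when exactly one triple, say $\{s,t,u\}$, contains $n+1$, in which case one verifies that the parity of $a_{ijk}$ is unaffected by the swap because the hypothesis $|\{i,j,k\}\cap\{s,t,u\}|<2$ prevents $\{s,t,u\}$ from coinciding with either of the distinguished sets $\{j,k,n+1\}$ or $\{i,k,n+1\}$ whose counts define the parity. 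When neither triple contains $n+1$, the defining relation $(a_{ijk}^{\epsilon_1}a_{stu}^{\epsilon_2})^2=1$ of $G_{n,p}^3$ supplies the required commutation.

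Relation (3) is the substantive case. Assume $i<j<k<l$. In \emph{Case A} ($n+1\notin\{i,j,k,l\}$) all four letters receive parity labels; since no intervening letter in $w$ involves $n+1$, each label depends only on counts in $\alpha$, and a direct computation gives $\epsilon_i = N_{kl(n+1)}+N_{jl(n+1)}$, $\epsilon_j = N_{kl(n+1)}+N_{il(n+1)}$, $\epsilon_k = N_{jl(n+1)}+N_{il(n+1)}$, together with $\epsilon_l = N_{jk(n+1)}+N_{ik(n+1)}$. Taking $t=l$ one finds $\epsilon_i+\epsilon_j+\epsilon_k = 2(N_{il(n+1)}+N_{jl(n+1)}+N_{kl(n+1)}) \equiv 0 \pmod 2$, which is precisely the hypothesis needed to invoke the third defining relation of $G_{n,p}^3$; combining that relation with $(a^\epsilon)^2=1$ gives $f_\alpha(w) = f_\alpha(w')$. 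In \emph{Case B} ($n+1\in\{i,j,k,l\}$), the ordering convention forces $l=n+1$; three of the four letters vanish, leaving only $a_{ijk}$, in position $1$ on the left and position $4$ on the right. The difference between its two parity labels equals the number of occurrences of the witness types $a_{jk(n+1)}$ or $a_{ik(n+1)}$ among the three intervening generators $a_{jk(n+1)}, a_{ik(n+1)}, a_{ij(n+1)}$; this number is $1+1 \equiv 0 \pmod 2$, so both sides map to the same $a_{ijk}^\epsilon$.

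The main obstacle is Case A: spotting that the role of $t$ in the third $G_{n,p}^3$ relation must be played by the maximum index of $\{i,j,k,l\}$, and checking the mod-$2$ cancellation $\epsilon_i+\epsilon_j+\epsilon_k \equiv 0$ that legitimizes its application. Once this is secured, the rest of the argument is careful bookkeeping resting on the multiset invariance observed at the outset.
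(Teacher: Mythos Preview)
Your proof is correct and follows essentially the same approach as the paper's: both reduce relations (1) and (2) to short case-checks, and for relation (3) both split on whether $n+1\in\{i,j,k,l\}$, verifying in the surviving-letter case that the parity of $a_{ijk}$ shifts by $1+1\equiv 0$, and in the generic case that $\epsilon_i+\epsilon_j+\epsilon_k\equiv 0\pmod 2$ so that the third relation of $G_{n,p}^3$ applies. Your explicit multiset-invariance observation cleanly isolates why the suffix labels are unaffected (the paper relies on this implicitly when it writes the common tail $B'$), but otherwise the arguments coincide.
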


\begin{proof}
We will show that if $\beta'$ is obtained from $\beta$ by applying one of the relations of $G_{n+1}^{3}$, then $f(\beta)$ and $f(\beta ')$ are equivalent in $G_{n,p}^{3}$. In the case of relations $a_{ijk}^{2}= 1$ and $(a_{ijk}a_{lmn})^{2}=1$, it is easy. Now we will show that if $\beta '$ is obtained by applying the relation $a_{ijk}a_{ijl}a_{ikl}a_{jkl} = a_{jkl}a_{ikl}a_{ijl}a_{ijk}$, then $f(\beta)$ and $f(\beta ')$ are equivalent in $G_{n,p}^{3}$, say $\beta = Fa_{ijk}a_{ijl}a_{ikl}a_{jkl}B$ and $\beta'= Fa_{jkl}a_{ikl}a_{ijl}a_{ijk}B$. Suppose that $l =n+1$. Then 
$f(\beta) = F' a_{ijk}^{\epsilon} B'$ and $f(\beta') = F' a_{ijk}^{\epsilon'} B'$. By definition of $f$, $\epsilon = \epsilon' + \alpha$, where $\alpha$ is the number of $a_{ikl}$ and $a_{jkl}$ in $a_{jkl}a_{ikl}a_{ijl}$ and $\alpha \equiv 0$ mod $2$. Suppose that $l \neq n+1$. Then $f(\beta) = F'a_{ijk}^{\epsilon_{l}}a_{ijl}^{\epsilon_{k}}a_{ikl}^{\epsilon_{j}}a_{jkl}^{\epsilon_{i}}B'$ and $f(\beta') = F'a_{jkl}^{\epsilon_{i}'}a_{ikl}^{\epsilon_{j}'}a_{ijl}^{\epsilon_{k}'}a_{ijk}^{\epsilon_{l}'}B'$. Assume that $i<j<k<l$. Since $n+1 \notin \{i,j,k,l \}$, the number of $a_{ik(n+1)}$ and $a_{jk(n+1)}$ before $a_{ijk}$ in $\beta$ is equal to the number of them in $\beta'$. That is, $\epsilon_{l} =  \epsilon_{l}'$. Analogously, $\epsilon_{k} =  \epsilon_{k}'$,$\epsilon_{j} =  \epsilon_{j}'$,$\epsilon_{i} =  \epsilon_{i}'$. Finally, we show that $\epsilon_{i} +\epsilon_{j} +\epsilon_{k} =0$ mod $2$. Denote $N_{ij(n+1)}$ is the number of $a_{ij(n+1)}$ in $F$. Then
$$\epsilon_{k} = N_{il(n+1)}+N_{jl(n+1)},$$
$$\epsilon_{j} = N_{il(n+1)}+N_{kl(n+1)},$$
$$\epsilon_{i} = N_{jl(n+1)}+N_{kl(n+1)}.$$
Therefore 
$$\epsilon_{k} +\epsilon_{j} +\epsilon_{i}= N_{il(n+1)}+N_{jl(n+1)}+N_{il(n+1)}+N_{kl(n+1)}+N_{jl(n+1)}+N_{kl(n+1)}=0~ mod~2.$$
Analogously we can show that the relation $a_{ijk}a_{ijl}a_{ikl}a_{jkl} = a_{jkl}a_{ikl}a_{ijl}a_{ijk}$ is preserved by $f$ for any $i,j,k,l$ in $\{1, \cdots, n\}$.

\end{proof}


Then for a word $\beta$ in $G_{n,p}^{3}$ and for $a_{ijk}$ in $\beta$, we can define $i_{a_{ijk}^{\epsilon}}^{p}(l)$ by 
\begin{center}
$i_{a_{ijk}^{\epsilon}}^{p}(l) = \left \{
\begin{array}{cc} 
    N_{ikl}^{0}+N_{jkl}^{0}+N_{ijl}^{1} ~ mod ~2 & \text{if}~\epsilon =0, l>k, \\
     N_{ikl}^{0}+N_{jkl}^{0}~ mod ~2 & \text{if}~\epsilon =0, l<k, \\
    N_{ikl}^{0}+N_{jkl}^{1}+N_{ijl}^{0}  ~ mod~ 2 &  \text{if}~\epsilon =1, l>k, \\
    N_{ikl}^{0}+N_{jkl}^{1} ~ mod~ 2 &  \text{if}~\epsilon =1, l<k . \\
   \end{array}\right. $
\end{center}
where $k>i,j$ and $N_{ikl}^{\epsilon}$ is the number of $a_{ikl}^{\epsilon}$, which appears before $a_{ijk}^{\epsilon}$.
Define a group presentation $F_{n}^{3}$ generated by $\{ \sigma ~|~ \sigma : \{1,2,\cdots n\} \backslash \{i,j,k\} \rightarrow \mathbb{Z}_{2} \}$ with relations $\{ \sigma^{2} = 1\}$. Define a word $w_{ijk}^{p}(\beta)$ in $F_{n}^{3}$ for $\beta$ by $w_{ijk}^{p}(\beta) = i_{c_{1}}i_{c_{2}} \cdots i_{c_{m}}$.

\begin{lem}
$w_{ijk}^{p}$ is an invariant for $G_{n,p}^{3}$.
\end{lem}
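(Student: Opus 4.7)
The plan is to verify that $w_{ijk}^p$ is preserved by each of the three defining relations of $G_{n,p}^3$. Fix the triple $\{i,j,k\}$ with $k=\max\{i,j,k\}$, recalling that $w_{ijk}^p(\beta)$ is the ordered product in $F_n^3$ of the values $i_{c_s}^p$ over all tracked letters $c_s = a_{ijk}^{\epsilon_s}$ appearing in $\beta$. For every relation one has to check two things: that the contribution of letters directly involved in the relation is trivial in $F_n^3$, and that the $N$-counts governing the values $i_{c_r}^p$ of every other tracked letter change only by even amounts.

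Relations (1) and (2) are routine. For $(a_{ijk}^\epsilon)^2=1$, deleting two adjacent identical letters shifts every $N$-count by $\pm 2$ or leaves it alone, and when the deleted pair is itself tracked, the two neighbours see identical $N$-counts, so $i_{c_s}^p = i_{c_{s+1}}^p$ and their product is trivial by $\sigma^2=1$. For $(a_{ijk}^{\epsilon_1}a_{lmn}^{\epsilon_2})^2=1$ under the hypothesis $|\{i,j,k\}\cap\{l,m,n\}|<2$, every $N$-count entering $i_{a_{ijk}^{\epsilon_1}}^p$ involves an index triple that contains two of $\{i,j,k\}$, which $\{l,m,n\}$ cannot be; by the symmetric argument, $a_{ijk}^{\epsilon_1}$ also does not contribute to any $N$-count in $i_{a_{lmn}^{\epsilon_2}}^p$, so the commutation is invisible to $w_{ijk}^p$.

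Relation (3) is the substantive case. Write the block as $a_{IJK}^{\epsilon_L}a_{IJL}^{\epsilon_K}a_{IKL}^{\epsilon_J}a_{JKL}^{\epsilon_I}$ with $I<J<K<L$ and parity condition $\epsilon_I+\epsilon_J+\epsilon_K\equiv 0\pmod 2$. If $\{i,j,k\}\not\subset\{I,J,K,L\}$, no tracked letter appears in the block and, as in relation (2), the predecessor multiset of every subsequent tracked letter is unchanged. Otherwise exactly one tracked letter $c_s$ sits in the block, its position shifts upon reversing the block, and the change in $i_{c_s}^p(\ell)$ equals the net contribution of the three accompanying letters on the old and new sides of $c_s$. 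The index triples of the three accompanying letters all lie in $\{I,J,K,L\}$, so the only value of $\ell$ for which a count can change is the remaining element of $\{I,J,K,L\}\setminus\{i,j,k\}$. A direct computation in each of the four subcases, translating indicators by $[\epsilon=0]=1-\epsilon$ and $[\epsilon=1]=\epsilon$, produces the same linear form $\epsilon_I+\epsilon_J+\epsilon_K\pmod 2$, which vanishes by the hypothesis of the relation.

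The main obstacle is the bookkeeping in relation (3): one must handle separately each of the four identifications of $\{i,j,k\}$ with one of the triples $\{I,J,K\},\{I,J,L\},\{I,K,L\},\{J,K,L\}$, and within each, carry out the $\ell>k$ versus $\ell<k$ subcase that triggers or suppresses the extra term $N_{ij\ell}^\cdot$ in the definition of $i_c^p$. The asymmetric split of the formula according to the sign of $\ell-k$ is precisely what keeps the answer uniform across the subcases and forces the total change to collapse to the vanishing sum $\epsilon_I+\epsilon_J+\epsilon_K$; verifying this uniformity is the heart of the argument.
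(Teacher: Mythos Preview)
Your argument is correct and follows the same overall strategy as the paper: check invariance of $w_{ijk}^{p}$ under each of the three relation families of $G_{n,p}^{3}$, with relations (1) and (2) being routine and relation (3) carrying all the content. The one stylistic difference is in how relation (3) is applied. The paper treats it as insertion/deletion of the eight-letter relator $(a_{stu}^{\epsilon_v}a_{stv}^{\epsilon_u}a_{suv}^{\epsilon_t}a_{tuv}^{\epsilon_s})^{2}$, so two copies $c_{1},c_{2}$ of the tracked generator appear and one shows $i_{c_{1}}^{p}=i_{c_{2}}^{p}$ by enumerating the parity subcases. You instead use the equivalent form of the relation as a reversal $abcd\leftrightarrow dcba$ of a four-letter block, so there is a single tracked letter and you compute the change in its $i^{p}$-value directly, observing that in every subcase the change collapses to $\epsilon_{I}+\epsilon_{J}+\epsilon_{K}\equiv 0\pmod 2$. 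This packaging is a bit cleaner than the paper's enumeration, but the underlying computation is the same.
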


\begin{proof}
It suffices to show that for two $\beta$ and $\beta'$ such that $\beta'$ is obtained from $\beta$ by applying one relations of $G_{n,p}^{3}$, $w_{ijk}^{p}(\beta) = w_{ijk}^{p}(\beta')$. For relations $(a_{ijk}^{\epsilon})^{2} =1$ and $(a_{ijk}^{\epsilon_{1}}a_{lmn}^{\epsilon_{2}})^{2}=1$, it is clear. Now consider the relation $(a_{ijk}^{\epsilon_{l}}a_{ijl}^{\epsilon_{k}}a_{ikl}^{\epsilon_{j}}a_{jkl}^{\epsilon_{i}})^{2} =1$, where $t \geq i,j,k,l$ and $\sum_{s \in \{i,j,k,l \}  \backslash \{t\}}\epsilon_{s} = 0$ mod $2$. If $(a_{stu}^{\epsilon_{v}}a_{stv}^{\epsilon_{u}}a_{suv}^{\epsilon_{t}}a_{tuv}^{\epsilon_{s}})^{2} =1$, where $s<t<u<v$ and $\epsilon_{s} +\epsilon_{t} +\epsilon_{u} = 0$ mod $2$ is applied and $|\{s,t,u,v\} \cap \{i,j,k\}|<3$, then $i_{a_{ijk}}$ is preserved for every $a_{ijk}$ in $\beta$ and hence $w_{ijk}^{p}(\beta)$ is preserved. Suppose $|\{s,t,u,v\} \cap \{i,j,k\}|=3$. Then there are two $a_{ijk}^{\epsilon}$ in $(a_{stu}^{\epsilon_{v}}a_{stv}^{\epsilon_{u}}a_{suv}^{\epsilon_{t}}a_{tuv}^{\epsilon_{s}})^{2} $, say they are $c_{1}$ and $c_{2}$ in order(left to right). 

If $s=i,t=j,u=k$, then 
$$a_{ijk}^{\epsilon_{v}}a_{ijv}^{\epsilon_{k}}a_{ikv}^{\epsilon_{j}}a_{jkv}^{\epsilon_{i}}a_{ijk}^{\epsilon_{v}}a_{ijv}^{\epsilon_{k}}a_{ikv}^{\epsilon_{j}}a_{jkv}^{\epsilon_{i}} =1 $$ 
is applied. Note that $i_{c_{1}}(r) = i_{c_{2}}(r)$ for $r \in \{1,\cdots n \} \backslash \{i,j,k,v\}$, because there are no $a_{ijr}^{\epsilon}$,$a_{ikr}^{\epsilon}$ and $a_{jkr}^{\epsilon}$ between $c_{1}$ and $c_{c}$. Denote that $\alpha_{xyz}^{\epsilon}$ by the number of  $a_{xyz}^{\epsilon}$ between $c_{1}$ and $c_{2}$. Then there are two cases : $\epsilon_{v} =0$ and $\epsilon_{v}=1$. For the case $\epsilon_{v} =0$, there are 4 subcases:
\begin{enumerate}
\item $(\epsilon_{k},\epsilon_{j},\epsilon_{v}) =(0,0,0)$, 
\item $(\epsilon_{k},\epsilon_{j},\epsilon_{v}) =(0,1,1)$,
\item $(\epsilon_{k},\epsilon_{j},\epsilon_{v}) =(1,0,1)$, 
\item $(\epsilon_{k},\epsilon_{j},\epsilon_{v}) =(1,1,0)$.
\end{enumerate}

Suppose that $(\epsilon_{k},\epsilon_{j},\epsilon_{l}) =(0,0,0)$. Since there are one $a_{ikv}^{0}$, one $a_{jkv}^{0}$ and no $a_{ijv}^{1}$ and $v>k$, 
$$i_{c_{2}}(v) = i_{c_{1}}(v)+\alpha_{ikv}^{0}+\alpha_{jkv}^{0}+\alpha_{ijv}^{1}= i_{c_{1}}(v)+1+1+0 = i_{c_{1}}(v)~mod2. $$  
Suppose that $(\epsilon_{k},\epsilon_{j},\epsilon_{v}) =(0,1,1)$. Since there are no $a_{ikv}^{0}$, $a_{jkv}^{0}$ and $a_{ijv}^{1}$, 
$$i_{c_{2}}(v) = i_{c_{1}}(v) +\alpha_{ikv}^{0}+\alpha_{jkv}^{0}+\alpha_{ijv}^{1} =i_{c_{1}}(v)+0+0+0 = i_{c_{1}}(v)~mod2. $$ 
Suppose that $(\epsilon_{k},\epsilon_{j},\epsilon_{v}) =(1,0,1)$. Since  there are no $a_{ikv}^{0}$ and one $a_{jkv}^{0}$ and one $a_{ijv}^{1}$ between $c_{1}$ and $c_{2}$ and $v>k$, 
$$i_{c_{2}}(v) =  i_{c_{1}}(v) +\alpha_{ikv}^{0}+\alpha_{jkv}^{0}+\alpha_{ijv}^{1} =i_{c_{1}}(v)+0+1+1 = i_{c_{1}}(v)~mod2. $$
Analogously, we can show that $i_{c_{1}} = i_{c_{2}}$ in other cases. Since $i_{c_{1}}i_{c_{2}} = 1$, $w_{ijk}^{p}$ is preserved with respect to relations in $F_{n}^{3}$.  

If $s=i, t=j, v=k$, then 
$$a_{iju}^{\epsilon_{k}}a_{ijk}^{\epsilon_{u}}a_{iuk}^{\epsilon_{j}}a_{juk}^{\epsilon_{i}}a_{iju}^{\epsilon_{k}}a_{ijk}^{\epsilon_{u}}a_{iuk}^{\epsilon_{j}}a_{juk}^{\epsilon_{i}}=1 $$
 is applied. Note that $i_{c_{1}}(r) = i_{c_{2}}(r)$ for $r \in \{1,\cdots n \} \backslash \{i,j,k,u\}$, because there are no $a_{ijr}^{\epsilon}$,$a_{ikr}^{\epsilon}$ and $a_{jkr}^{\epsilon}$ between $c_{1}$ and $c_{c}$. Since $u<k$, by definition of $i_{c}$, $a_{iju}^{\epsilon_{k}}$ does not affect to $i_{c_{1}}(u)$ and $i_{c_{2}}(u)$. Since $\epsilon_{u} + \epsilon_{j}+\epsilon_{i} =0$ mod $2$, there are four cases:
\begin{enumerate}
\item $(\epsilon_{u},\epsilon_{j},\epsilon_{i}) =(0,0,0)$, 
\item $(\epsilon_{u},\epsilon_{j},\epsilon_{i}) =(0,1,1)$,
\item $(\epsilon_{u},\epsilon_{j},\epsilon_{i}) =(1,0,1)$, 
\item $(\epsilon_{u},\epsilon_{j},\epsilon_{i}) =(1,1,0)$.
\end{enumerate}
If $(\epsilon_{u},\epsilon_{j},\epsilon_{i}) =(0,0,0)$, then
$$ i_{c_{2}}(v) =  i_{c_{1}}(v) +\alpha_{iuk}^{0}+\alpha_{juk}^{0} =i_{c_{1}}(v)+1+1 = i_{c_{1}}(v)~mod2. $$
If $(\epsilon_{u},\epsilon_{j},\epsilon_{i}) =(1,0,1)$, then 
$$ i_{c_{2}}(v) =  i_{c_{1}}(v) +\alpha_{iuk}^{0}+\alpha_{juk}^{1} =i_{c_{1}}(v)+0+1 = i_{c_{1}}(v)~mod2. $$
Analogously, $i_{c_{1}} = i_{c_{2}}$ in other cases and the proof is completed.
It is easy to show that $w_{ijk}^{p}(\beta)$ does not change by applying $(a_{stu}^{\epsilon_{v}}a_{stv}^{\epsilon_{u}}a_{suv}^{\epsilon_{t}}a_{tuv}^{\epsilon_{s}})^{2} =1$, where $z$ is the largest index in $\{s,t,u,v\} $ and $\sum_{x \in \{s,t,u,v \} \backslash \{z\}} \epsilon_{x}$ for any $\{s,t,u,v \} \subset \{1, \cdots, n\}$ and the proof is completed.
\end{proof}

\begin{exa}
Let $$\beta = a_{124}a_{245}a_{124}a_{245}a_{234}a_{245}a_{234}a_{245}a_{245}a_{124}a_{245}a_{124}a_{245}a_{234}a_{245}a_{234}$$ in $G_{5}^{3}$. Then 
$$f(\beta) = a_{124}^{0}a_{124}^{1}a_{234}^{0}a_{234}^{1}a_{124}^{1}a_{124}^{0}a_{234}^{1}a_{234}^{0},$$
and $f(\beta)$ is in $G_{4,p}^{3}$. Let $c_{1} = a_{124}^{0}$, $c_{2} = a_{124}^{1}$, $c_{3} = a_{124}^{1}$ and $c_{4} = a_{124}^{0}$  such that the order of $c_{i}$ agrees with the order of $a_{124}^{\epsilon}$ in $f(\beta)$. For a type $(124)$ and for an index $3$, 
\begin{itemize}
\item $i_{c_{1}}(3)= N_{234}^{0}+N_{134}^{0}= 0+0 =0,$
\item $i_{c_{2}}(3)= N_{234}^{1}+N_{134}^{0}=0+0=0,$
\item $i_{c_{3}}(3)= N_{234}^{1}+N_{134}^{0}=1+0=1,$
\item $i_{c_{4}}(3)= N_{234}^{0}+N_{134}^{0}= 1+1=0.$
\end{itemize}

Hence the word, which is derived from the above calculation, is not trivial.
\end{exa}

\begin{rem}
We can obtain homomorphisms from $G_{n+1}^{3}$ to $G_{n,p}^{3}$ by omitting not only $n+1$ index, but also one of the other indices. That is, as $i_{ij}^{p}$ in section 2, we obtain invariants by adding information from a fixed index. 
\end{rem}

\end{document}